\documentclass{article}
\usepackage{mathtools,amsfonts,amssymb,amsthm,amscd}
\usepackage{mathrsfs}
\usepackage{color}
\usepackage{comment}
\usepackage{tikz}
\usepackage{autobreak}
\usepackage{float}
\usepackage{hyperref}
\usepackage{cleveref}
\usepackage{autonum}

\crefname{theorem}{Theorem}{Theorems}
\crefname{lemma}{Lemma}{Lemmas}
\crefname{proposition}{Proposition}{Propositions}
\crefname{corollary}{Corollary}{Corollaries}
\crefname{definition}{Definition}{Definitions}
\crefname{example}{Example}{Examples}
\crefname{remark}{Remark}{Remarks}
\crefname{section}{Section}{Sections}
\crefname{figure}{Figure}{Figures}
\crefname{equation}{}{}
\crefname{enumi}{}{}

\usetikzlibrary{positioning}
\newcommand{\field}[1]{\mathbb{#1}}
\newcommand{\Z}{\field{Z}}
\newcommand{\Q}{\field{Q}}
\newcommand{\R}{\field{R}}

\newcommand{\F}{\field{F}}
\newcommand{\Oint}{\mathcal{O}}

\newcommand{\iK}{\mathrm{i}_{K}}
\newcommand{\iphi}{\mathrm{i}_{\varphi}}
\newcommand{\Gal}{\mathrm{Gal}}

\newtheorem{theorem}{Theorem}[section]
\newtheorem{lemma}[theorem]{Lemma}
\newtheorem{proposition}[theorem]{Proposition}

\theoremstyle{definition}

\newtheorem{definition}[theorem]{Definition}

\theoremstyle{remark}
\newtheorem{remark}[theorem]{Remark}

\allowdisplaybreaks
\numberwithin{equation}{section}
\tikzset{sgplattice/.style={inner sep=1pt,norm/.style={red!50!blue},char/.style={blue!50!black},
  lin/.style={black!50}},cnj/.style={black!50,yshift=-2.5pt,left=-1pt of #1,scale=0.5,fill=white}}

\begin{document}

\title{Computating decomposition groups and inertia groups using Newton polygons}
%unramified over quadratic subfields}
\author{Kazuma Igarashi and Nozomu Suzuki}
\date{\today}
\maketitle
\begin{abstract}
   Newton polygons are powerful tools for computing the decomposition of prime ideals in extension rings.
  Methods based on Newton polygons have been developed through the work of Ore, Montes, and Nart.
  K\"{o}lle and Schmid obtained a method for determining decomposition groups from Newton polygons and associated data under assumptions introduced by Ore.
  In this paper, we extend their approach and show how to determine decomposition groups under the weaker assumptions introduced by Montes and Nart, formulated in terms of indices.

  \renewcommand{\thefootnote}{\fnsymbol{footnote}}
  \footnote[0]{2020 \textit{Mathematics Subject Classification.} 11S20 ; 11R32, 11S05.}
  \renewcommand{\thefootnote}{\arabic{footnote}}
  \emph{Keywords:} Galois group, Newton polygon, index of equation order.
\end{abstract}

\section{Introduction}\label{sec:Intro}

  In general, describing the prime ideal decomposition of a given extension of number fields is a highly difficult and fundamental problem in number theory.
  In 1878, Dedekind, in his paper \cite{Dedekind1878}, proved that under certain conditions, the prime ideal decomposition corresponds to the factorization of the defining polynomial into irreducible factors over the residue field.
  Dedekind's theorem is well known to be remarkable both theoretically and computationally, although its applicability is limited.

  In 1928, Ore extended Dedekind's approach by introducing the Newton polygon method, which enables the determination of the prime ideal decomposition under a weaker condition, called regularity.
  Ore's theorem applies in far more cases than Dedekind's theorem and describes the decomposition in terms of Newton polygons. 

  Furthermore, in 1992, Montes and Nart refined Ore's method and succeeded in computing prime ideal decompositions under an even weaker condition.
  The condition is necessary and sufficient for the applicability of their method. 

  On the other hand, Dedekind's theorem has also inspired further developments in the study of Galois groups.
  Beckmann (in \cite{MR1271245}) observed that, under the assumptions of Dedekind's theorem, the cycle type of a generator of the inertia group of a tamely ramified prime can be determined.
  As she was unable to locate a proof of this statement, she provided one in her paper.

  Furthermore, in 2004-2009, K\"{o}lle and Schmid extended these ideas by establishing an analogue of the above theorem under the assumption of Ore.
  Their method, which makes substantial use of local field theory, not only enabled the computation of decomposition and inertia groups but also led to further developments in this area.

  In this paper, we aim to establish an analogue of the theorem of K\"{o}lle and Schmid under the assumption of Montes and Nart (\cref{thm:our main theorem,thm:our main theorem 2}).
  In particular, we refine the polynomial construction of K\"{o}lle and Schmid, which is based on Ore's method, within the framework of Montes and Nart. 

\section{Preliminaries}\label{sec:Prelim}

  Let $p$ be a prime number. Fix an algebraic closure $\overline{\Q}_p$ (resp. $\overline{\field{F}}_p$) of $\field{Q}_p$ (resp. $\field{F}_p$). 
  Throughout, every algebraic extension of $\field{Q}_p$ (resp. $\field{F}_p$) is 
  regarded as a subfield of $\overline{\field{Q}}_p$ (resp. $\overline{\field{F}}_p$). 
  Let $K$ be a finite extension of $\field{Q}_p$. We denote by $\mathcal{O}_K$ the ring of integers of $K$, 
  by $\mathfrak{p}_K$ the prime ideal of $\mathcal{O}_K$, by $\field{F}_K$ the residue field, and by $v_K$ the valuation of $\overline{\Q}_p$ 
  normalized by $v_K(K^{\times}) = \Z$. 
  We fix a uniformizer $\pi \in \mathcal{O}_K$. 
  For $a \in \mathcal{O}_K$, the image of $a$ in $\field{F}_K$ is denoted by $\overline{a}$. 
  We shall denote the image of $\varphi(X) \in \mathcal{O}_K [X]$ in $\field{F}_K [X]$ also by $\overline{\varphi}(X)$. 
  In this paper, we assume that a polynomial $f$ is separable.

  \subsection{Newton polygons}\label{sec:Newton polygons}
    
    We define $\varphi$-polygons, which generalize the classical Newton polygon, and the polynomials associated to them. 
    To this end, we extend the valuation $v_K$ to $\mathcal{O}_K[X]$, which we denote by the same symbol, by 
    \begin{align}
      v_K:\mathcal{O}_K[X] & \rightarrow \Z_{\ge 0} \cup \{ \infty \}, \\
      a_0 + a_1 X + \cdots + a_n X^n & \mapsto \min\{v_K(a_i):0 \le i \le n \}. 
    \end{align}
    Let $\varphi(X) \in \mathcal{O}_K[X]$ be a monic irreducible polynomial of degree $l \ge 1$ such that 
    its reduction $\overline{\varphi}(X)$ is also irreducible over $\field{F}_K$. 
    Fix a root $\zeta$ of $\overline{\varphi}$. 
    Given a polynomial $f(X) \in \mathcal{O}_K[X]$ of degree $n$, we obtain a unique \emph{$\varphi$-expansion} as 
    \[
      f(X) = \sum_{i = 0}^{[n/l]} a_i(X) \varphi(X)^i, 
    \]
    where each $a_i(X) \in \mathcal{O}_K[X]$ is a polynomial of degree $< l$ or $a_i(X)=0$. 
    Throughout, we assume $a_0(X) \ne 0$. 

    When $f(X) \in \mathcal{O}_K[X]$ is expanded as above, we define the \emph{$\varphi$-polygon} of $f(X)$ 
    as the lower convex envelope in the Euclidean plane of points $\{(i,v_K(a_i))|0 \le i \le [n/l]\}$. 
    When $\varphi(X) = X$, the $\varphi$-polygon is the classical Newton polygon. 
    If $f(X)$ is a monic polynomial, we call the set of sides with negative slope the \emph{principal part} of $\varphi$-polygon. 
    When $\overline{\varphi}$ does not divide $\overline{f}$, the $\varphi$-polygon of $f$ has no principal part. 
    We say that the $\varphi$-polygon is \emph{one-sided} when it has exactly one side; otherwise, we say that it is \emph{many-sided}. 

    In general, let $S$  be a line segment in $\R^2$ with negative slope and its endpoints
    having integer coordinates. 
    Let its initial point be $(r,s)$ and its terminal point be $(r + E,s - H)$. 
    We call $E$ the \emph{length} of $S$ and $H$ the \emph{height} of $S$, denoted $E(S)$ and $H(S)$, respectively. 
    We define 
    \[
      d = \gcd(E(S),H(S)), 
    \]
    and call $d$ the degree of $S$. If $H = E = 0$, we set $d = 0$. 

    Let $f(X) \in \mathcal{O}_K[X]$ be a monic polynomial, and $S$ be a line segment 
    in $\{ (x,y) \in \R^2 \mid x \ge 0, y \ge 0 \}$ with negative slope and its endpoints having integer coordinates. 
    Suppose that no vertex of the $\varphi$-polygon of $f$ lies below $S$. 
    We denote the endpoints of $S$ by $P = (e_0,h_0)$ and $Q = (e_1,h_1)$, where $e_0 \le e_1$. 
    Write $d$ for the degree of $S$. 
    Put 
    \[
      b_j(X) = \pi^{-h_0 + jh} a_{e_0 + je}, 
    \]
    where $h = \frac{h_0 - h_1}{d}, e = \frac{e_1 - e_0}{d}$. 
    We define the \emph{polynomial associated to $f$ and $S$}, or the \emph{associated polynomial of $f$ and $S$} 
    as 
    \[
      f_S(Y) = \sum_{j=0}^d \overline{b_j}(\zeta) Y^j.
    \]
    If no vertex lies on $S$, we set $f_S(Y) = 0$. 
    We also define the normalized associated polynomial by 
    \[
      f_S^{\mathrm{norm}}(Y) = \overline{b_d}(\zeta)^{-1} \sum_{j=0}^d \overline{b_j}(\zeta) Y^j.
    \]

    We now define the associated polynomial for a general line or line segment $S$. 
    Let $V$ be the set of integer lattice points lying on $S \cap \{ (x,y) \in \R^2 \mid x \ge 0, y \ge 0 \}$. 
    We denote by $P, Q$ the points of $V$ with minimal and maximal abscissa, respectively, and 
    set $S'$ to be the line segment between $P$ and $Q$. 
    We define the associated polynomials of $f$ and $S$ by $f_S = f_{S'}$ and $f^{\mathrm{norm}}_S = f^{\mathrm{norm}}_{S'}$. 

    \begin{remark}
      If $f = \varphi^m$ and the $\varphi$-polygon of $f$ is one-sided, then $f_S = f_S^{\mathrm{norm}}$. 
    \end{remark}

  \subsection{Works of Ore, Montes and Nart on Newton polygons}\label{sec:Ore Montes Nart}
    
    In this section, we review the works of Ore and of Montes--Nart on Newton polygons. 

    The following two theorems are well-known facts about Newton polygons.

    \begin{theorem}[Theorem of the product]\label{thm:theorem of product}
      Let $f_1, \dots ,f_g \in \mathcal{O}_K[X]$ be monic polynomials, and write 
      $f(X) = f_1(X) \cdots f_g(X)$. 
      Let $S_{i,1}, \dots S_{i,k_i}$ be the sides of the principal part of the $\varphi$-polygon 
      of $f_i$. We define $M=\{-H(S_{i,j})/E(S_{i,j}) \in \Q_{< 0} \mid 1 \le i \le g,1 \le j \le k_i\}$, 
      and let $M'$ be the set of slopes of the sides in the principal part of the $\varphi$-polygon of $f$. 
      Then, $M=M'$, and for any side $S$ of the $\varphi$-polygon of $f$ with slope $-m$, we have 
      \[
        E(S)=\sum_{\substack{i,j\\H(S_{i,j})/E(S_{i,j})=m}} E(S_{i,j}), H(S)=\sum_{\substack{i,j\\H(S_{i,j})/E(S_{i,j})=m}} H(S_{i,j}). 
      \]
      Moreover, for any $m \in \Q_{>0}$, let $S_m$ be a side of the $\varphi$-polygon of $f$ of slope $-m$, 
      and let $S_{i,m}$ be a side of the $\varphi$-polygon of $f_i$ of slope $-m$. 
      Then, 
      \[
        f_S^{\mathrm{norm}}(Y) = \sideset{}{'}\prod_{i}f_{S_{i,m}}^{\mathrm{norm}}(Y), 
      \]
      where the product runs over all $i$ such that the $\varphi$-polygon of $f_i$ has a side of slope $-m$. 
    \end{theorem}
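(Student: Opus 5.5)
The plan is to reduce to the case $g=2$ by induction on $g$. The key device is to describe the principal part of the $\varphi$-polygon through a family of "residual" valuations. For each $\lambda\in\Q_{\ge 0}$ define, for $F=\sum_i a_i\varphi^i$,
\[
  v_\lambda(F)=\min_i\{v_K(a_i)+\lambda i\}.
\]
This is the ordinate at the origin of the lowest line of slope $-\lambda$ touching the $\varphi$-polygon of $F$ from below.

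Set $\theta=\overline{\varphi}$ and work in the graded algebra $\mathrm{gr}_\lambda=\bigoplus_t \{v_\lambda\ge t\}/\{v_\lambda>t\}$. The image of $F$ in this algebra, its initial form $\mathrm{in}_\lambda F$, encodes exactly the points lying on the line of slope $-\lambda$ that supports the polygon.

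\textbf{Step 1.} Show that $v_\lambda$ is a valuation on $K[X]$. This is the classical fact that it is the Gauss-type (MacLane) augmented valuation attached to $\varphi$ and $\lambda$. The substance is $v_\lambda(FG)=v_\lambda(F)+v_\lambda(G)$. To prove it, I compute the product $a_ib_j$ of two $\varphi$-coefficients: reduce it modulo $\varphi$, so that $a_ib_j=c+d\varphi$ with $\deg c,\deg d<l$, and check that $v_K(c)=v_K(a_i)+v_K(b_j)$. This holds because $\overline{\varphi}$ is irreducible and $\deg a_i,\deg b_j<l$. After dividing by $\pi^{v_K(a_i)}$ and $\pi^{v_K(b_j)}$, the reductions of $a_i$ and $b_j$ are nonzero elements of the field $\F_K[X]/(\overline{\varphi})$, so their product is nonzero. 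The carry $d\varphi$ contributes only to the index $i+j+1$ and beyond, with valuation at least $v_K(a_i)+v_K(b_j)$. Since $\lambda>0$, it lands strictly above the relevant line and disappears in the initial form.

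The same computation gives the structure of the graded algebra:
\[
  \mathrm{gr}_\lambda\cong \F_K(\zeta)[\,\text{classes of }\pi,\ \varphi\,],
\]
with $\mathrm{in}_\lambda$ multiplicative. Writing $\lambda=h/e$ in lowest terms, the homogeneous pieces are spanned by monomials $\pi^{a}\varphi^{b}$ with $a+\lambda b$ fixed. Setting $Y=\varphi^e/\pi^h$, the initial form of $F$ is, up to a monomial factor, the polynomial $F_S(Y)$ evaluated at $\zeta$. This uses exactly the definition $b_j=\pi^{-h_0+jh}a_{e_0+je}$ given above.

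\textbf{Step 2 (slopes and lengths).} For each $\lambda$, let $i_{\min}(F,\lambda)$ and $i_{\max}(F,\lambda)$ be the smallest and largest abscissae of points on the supporting line of slope $-\lambda$. These are the $\varphi$-degree of the lowest and highest monomials in $\mathrm{in}_\lambda F$. Multiplicativity of initial forms in the domain $\mathrm{gr}_\lambda$ gives
\[
  i_{\max}(fg,\lambda)=i_{\max}(f,\lambda)+i_{\max}(g,\lambda),
\]
and likewise for $i_{\min}$. Now $E(S)=i_{\max}-i_{\min}$ for the side of slope $-\lambda$ (zero if there is none), and $H=\lambda E$. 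So lengths and heights add, and a side of slope $-\lambda$ exists for $fg$ iff it exists for $f$ or for $g$. This yields $M=M'$ and the formulas for $E(S)$ and $H(S)$. For monic factors I must also check that the principal parts (the slopes $\lambda>0$) match. This is automatic because for $\lambda>0$ the last point $(\deg_\varphi,0)$ of each monic polygon behaves additively.

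\textbf{Step 3 (residual polynomials).} Multiplicativity of $\mathrm{in}_\lambda$, translated through the identification $\mathrm{in}_\lambda F\leftrightarrow \pi^{*}\varphi^{e_0}F_S(Y)$, gives
\[
  (fg)_S(Y)=c\cdot f_{S_1}(Y)\,g_{S_2}(Y)
\]
for a nonzero constant $c\in\F_K(\zeta)$. When one factor has no side of slope $-\lambda$, its contribution is the single-vertex constant. Normalizing by the leading coefficients removes $c$, which gives the stated product of normalized associated polynomials. This is where the convention of using the lattice endpoints $P,Q$ on the line matters.

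The main obstacle is Step 1: checking carefully that the carry terms from reducing products modulo $\varphi$ never land on the supporting line. This needs $\lambda>0$ and $\deg a_i<l$. It is also what makes the coefficient $\overline{b_j}(\zeta)$, rather than something non-multiplicative, the correct residue.
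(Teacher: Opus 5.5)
The paper does not prove this statement. It is quoted in \cref{sec:Ore Montes Nart} as one of two ``well-known facts'' (Ore, Montes--Nart), so there is no internal argument to compare yours with. Your proof is correct and follows the standard route. For $\lambda=h/e>0$, $v_\lambda$ is MacLane's augmented valuation. Your carry computation, which genuinely needs $\lambda>0$ and $\deg a_i<l$, makes $\mathrm{gr}_\lambda$ a polynomial ring over $\F_K(\zeta)$ in $\bar\pi,\bar\varphi$ (Laurent in $\bar\pi$ if you work on $K[X]$ rather than $\mathcal{O}_K[X]$). The single identity $\mathrm{in}_\lambda(fg)=\mathrm{in}_\lambda(f)\,\mathrm{in}_\lambda(g)$ then gives the additivity of $i_{\min}$ and $i_{\max}$, hence of $E$ and $H$ and the equality $M=M'$. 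The same identity gives the factorization of the associated polynomials. Nothing here appeals to roots or to \cref{thm:theorem of the polygon}, so the argument is self-contained.

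Three small points are worth tightening.

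First, when you read $i_{\min},i_{\max}$ as extreme $\bar\varphi$-degrees, say explicitly that the monomials $\bar\pi^{s}\bar\varphi^{i}$ are linearly independent over $\F_K(\zeta)$. This is the uniqueness of the $\varphi$-expansion. It, and not merely $\mathrm{gr}_\lambda$ being a domain, is what makes these degrees well defined and additive.

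Second, with the paper's normalization $b_j=\pi^{-h_0+jh}a_{e_0+je}$, your constant $c$ is exactly $1$. Additivity of $i_{\min}$ and of $v_\lambda$ places the initial point of the side of $fg$ at the sum of the initial points (or touching vertices) of the factors. Hence $(fg)_S=f_{S_1}\,g_{S_2}$ on the nose, and a factor with no side of slope $-\lambda$ contributes the constant $\overline{b_0}(\zeta)\neq 0$.

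Third, your closing remarks about the last point $(\deg_\varphi,0)$ and about the general-line convention are not needed. Step~2 already treats every $\lambda>0$, which is exactly the principal part. The statement only concerns sides, whose endpoints are vertices with nonzero residual coefficients. Under the general-line convention the normalization could even be undefined, because the last lattice point on the line may lie beyond the polygon.
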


    \begin{theorem}[Theorem of the polygon]\label{thm:theorem of the polygon}
      Let $f \in \mathcal{O}_K[X]$ be a monic polynomial whose reduction is a power of $\varphi$ and whose $\varphi$-polygon
      consists of the sides $S_1, \dots ,S_g$. Let the slope of $S_{i}$ be $-m_i$. 
      Then, there exist monic polynomials $f_1, \dots ,f_g \in \mathcal{O}_K[X]$ 
      satisfying the following conditions: 
      \begin{enumerate}
        \item $ f(X) = \displaystyle \prod_{i = 1}^g f_i(X) $. 

        \item The $\varphi$-polygon of $f_i$ is one-sided, and it has the same length and height as $S_{i}$. 

        \item Let $S_{i}$ also denote the $\varphi$-polygon of $f_i$, then $f_{S_{i}}^{\mathrm{norm}}(Y) = (f_i)_{S_{i}}^{\mathrm{norm}}(Y)$. 

        \item If $\theta \in \overline{\Q}_p$ is a root of $f_i(X)$, then $v_K(\varphi(\theta)) = m_i$. 
      \end{enumerate}
    \end{theorem}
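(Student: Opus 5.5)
The plan is to reduce everything to the roots of $f$ in $\overline{\Q}_p$. For $\theta \in \overline{\Q}_p$ with $\overline{\varphi}(\overline{\theta}) = 0$, set $\mu(\theta) = v_K(\varphi(\theta))$.

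Write $f = \prod_{\theta} (X - \theta)$, where $f$ is monic, separable, and $\overline{f}$ is a power of $\overline{\varphi}$. Then every root is integral and reduces to a root of $\overline{\varphi}$, so $\mu(\theta) > 0$. Group the roots according to the value of $\mu(\theta)$, and define
\[
  f_i(X) = \prod_{\mu(\theta) = m_i} (X - \theta).
\]
Since $\mu$ is invariant under $\Gal(\overline{\Q}_p/K)$, each $f_i$ lies in $\mathcal{O}_K[X]$ and is monic. Statement (1) is then immediate, provided we show that the values taken by $\mu$ are exactly $m_1, \dots, m_g$. Statement (4) holds by construction.

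The key step is to compute the $\varphi$-polygon of a monic $F \in \mathcal{O}_K[X]$ all of whose roots $\theta$ satisfy $\mu(\theta) = m$. Reduce to the case where $F$ is irreducible, and apply \cref{thm:theorem of product} to pass back to products. For $F$ irreducible, let $\alpha$ be a root and consider $\varphi(\alpha)$. Its characteristic polynomial $G(Y)$ over $K$ has degree $\deg F$, and its Newton polygon (with $\varphi = X$) is one-sided of slope $-m$ by the classical valuation-of-roots argument, since all conjugates have valuation $m$.

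The comparison I intend is between the $\varphi$-expansion $F = \sum a_i \varphi^i$ and this norm polynomial. We have $\deg F = l k$ and $F(X) = \mathrm{Res}_Y(\ldots)$; more concretely, I would prove the inequality
\[
  v_K(a_i(\theta)) \ge v_K(a_i)
\]
for each root $\theta$, with equality when $\overline{a_i}$ is nonzero mod $\pi$. This holds because $\deg a_i < l$, $\overline{a_i}(\overline\theta) = 0$ only if $\overline{a_i} = 0$, and $\overline\theta$ has degree $l$. From $F(\theta) = 0$, the terms $a_i(\theta)\varphi(\theta)^i$ must cancel in minimal valuation. This forces the lower envelope of $(i, v_K(a_i))$ to have a segment of slope $-m$ long enough to account for all the roots. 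A counting argument, using total length $k = \deg F / l$ and the fact that every root has the same $\mu$, then forces the polygon to be one-sided of slope $-m$ with length $k$.

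With that in hand, \cref{thm:theorem of product} shows that the principal $\varphi$-polygon of $f = \prod f_i$ has sides with slopes exactly the distinct values of $\mu$, and with lengths and heights equal to those of the polygons of the $f_i$. This gives (2) and identifies the $m_i$. The same theorem, applied side by side, gives (3), because for a fixed slope $-m_i$ only $f_i$ contributes a nontrivial factor.

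The main obstacle is the one-sidedness of the $\varphi$-polygon of $f_i$ in the key step. A slicker alternative is Hensel/Newton-polygon factorization via the $\varphi$-adic valuation on $K[X]$ (the Mac Lane augmented valuation $v(\sum a_i\varphi^i) = \min (v_K(a_i) + i m)$). I would first try the root-valuation argument. The fallback is to show that this valuation is multiplicative, i.e. that its graded algebra is a domain whose degree-zero part is $\F_K[\zeta]$, which makes the $\varphi$-polygon of a product the Minkowski sum.
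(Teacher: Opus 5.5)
The paper only quotes this theorem as a well-known fact, so there is no proof of its own to compare with. Judged on its own, your reduction is sound but the decisive step is missing.

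\textbf{What works.} Grouping the roots by $\mu(\theta)=v_K(\varphi(\theta))$ does give monic $f_i\in\mathcal{O}_K[X]$ satisfying (4). Suppose you knew that a monic $F$ with $\overline{F}=\overline{\varphi}^{\,k}$, all of whose roots have $\mu=m$, has a one-sided $\varphi$-polygon of slope $-m$ and length $k$. Then (1)--(3) would follow from the theorem of the product exactly as you describe. (Note that $v_K(a_i(\theta))=v_K(a_i)$ holds for every nonzero $a_i$, after dividing by $\pi^{v_K(a_i)}$. You need this for all $i$, not only when $\overline{a_i}\neq 0$.)

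\textbf{The gap.} Your only tool for the key step is that $F(\theta)=0$ forces $\min_i\bigl(v_K(a_i)+i\mu(\theta)\bigr)$ to be attained at least twice.
\begin{itemize}
\item This shows that $-m$ \emph{occurs} as a slope. It says nothing about the length of that side or about the absence of other sides.
\item Since every root yields the same $m$, no counting of roots can extract more from it.
\item Adding $v_K(a_0)=km$ (from $\mathrm{Res}(F,\varphi)=\pm\prod_{\varphi(\xi)=0}a_0(\xi)$) does not help. For $k=4$, $m=1$, the polygon with vertices $(0,4),(1,2),(2,1),(4,0)$ has the right endpoints and a side of slope $-1$, yet is not one-sided.
\item The characteristic polynomial of $\varphi(\alpha)$ is never actually related to the $\varphi$-expansion of $F$; the line ``$F(X)=\mathrm{Res}_Y(\ldots)$'' is left blank. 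That relation is the whole content of the theorem.
\item Your fallback does not close the gap either. Multiplicativity of $\min_i(v_K(a_i)+im)$ is just the theorem of the product, which you already use. It cannot rule out a second slope for an irreducible $F$ without a Hensel-type splitting along sides, and that splitting is the statement being proved.
\end{itemize}

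\textbf{How to close it.} What is missing is a quantitative link between side lengths and root counts. One way to supply it: for positive integers $b,c$, let $g=\varphi^b-\pi^c$.
\begin{itemize}
\item Its $lb$ roots $\xi$ are integral, with $\overline{\varphi}(\overline{\xi})=0$ and $v_K(\varphi(\xi))=c/b$. Hence $v_K(F(\xi))=\min_i\bigl(v_K(a_i)+ic/b\bigr)$ whenever this minimum is attained only once.
\item For each root $\alpha$ of $F$ with $b\mu(\alpha)\neq c$, we have $v_K(g(\alpha))=\min\bigl(b\mu(\alpha),c\bigr)$.
\end{itemize}
Computing $v_K(\mathrm{Res}(F,g))$ once over the roots of $g$ and once over the roots of $F$ gives
\[
  l\cdot\min_i\bigl(v_K(a_i)+it\bigr)=\sum_{\alpha}\min\bigl(\mu(\alpha),t\bigr)
\]
for all but finitely many rational $t=c/b>0$, hence for all $t>0$ by continuity. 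Comparing the breakpoints of these two concave piecewise linear functions shows that the side of slope $-\mu$ has length $\#\{\alpha:\mu(\alpha)=\mu\}/l$. This is exactly the missing lemma; apply it to $f$ and to each $f_i$.

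Alternatively, pass to the unramified extension of degree $l$, where $\varphi$ has a root $\xi$. There, check that the principal part of the $\varphi$-polygon of $F$ coincides with that of the classical Newton polygon of $F(X+\xi)$, and use the classical root--slope correspondence together with Frobenius symmetry. With either lemma in place, your outline becomes a complete proof.
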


    Let $f \in \mathcal{O}_K[X]$ be a monic polynomial whose $\varphi$-polygon is one-sided, and denote this polygon by $S$. 
    We say $f$ is \emph{regular} if the associated polynomial $f_S(Y)$ is separable. 
    For a regular polynomial, Ore showed that the ramification index and inertia degree of the field defined by the polynomial can be computed from the $\varphi$-polygon. 

    \begin{theorem}[Theorem of Ore]\label{thm:theorem of Ore}
      Let $f(X) \in \mathcal{O}_K[X]$ be a monic polynomial whose $\varphi$-polygon is one-sided, and denote this polygon by $S$. 
      Suppose that $\overline{f}(Y) = \overline{\varphi}(Y)^n (n \in \Z_{\ge 0})$. 
      We write $e=E(S)/d$ and $h=H(S)/d$. Let 
      \[
        f_S(Y) = \psi_1(Y)^{e_1} \cdots \psi_t(Y)^{e_t} 
      \]
      be the factorization of $f_S^{\mathrm{norm}}$ into a product of powers of distinct irreducible polynomials
      of $\field{F}_K[Y]$. 
      Then $f$ admits a factorization 
      \[
        f(X) = f_1(X) \cdots f_t(X), 
      \]
      where each $f_i(X)$ is a monic polynomial with a one-sided $\varphi$-polygon $S_{i}$ 
      of the same slope as $S$, and whose associated polynomial $(f_i)_{S_{i}}(Y)$ equals $\psi_i(Y)^{e_i}$. 
      Moreover, we suppose that $f(X)$ is regular (i.e., $e_1 = \cdots = e_t = 1$), and let $\theta$ be a root of $f_i(X)$ and $L = K(\theta)$. 
      Then, all $f_i$ are irreducible, the prime ideal of $L$ equals $(\varphi(\theta)^b/\pi^c)\mathcal{O}_L$ 
      where $b$ and $c$ are positive integers such that $bh - ce = 1$, and $\mathcal{O}_L$ is the ring of integers of $L$, and 
      \[
        e(L/K) = e, f(L/K) = l \cdot \deg \psi_i(Y). 
      \]
    \end{theorem}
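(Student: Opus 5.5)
The plan has two parts: first the factorization via Hensel lifting in the graded setting, then the ramification data from a valuation computation at a root $\theta$.

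\emph{Factorization.} Write $f_S^{\mathrm{norm}}(Y)=\prod_i \psi_i(Y)^{e_i}$ with the $\psi_i$ pairwise coprime. I would work $\varphi$-adically with the weight $\nu(a\varphi^j)=e\,v_K(a)+h\,j$, which is minimized along $S$. The initial form of $f$ for $\nu$ is, after the substitution $Y=\varphi^e/\pi^h$, essentially $f_S(Y)$ times a monomial. Over $\mathbb{F}_K(\zeta)=\mathbb{F}_K[X]/(\overline{\varphi})$, coprime factorizations of this initial form lift, by a Hensel-type (Newton) iteration that strictly increases $\nu$ of the error at each step, to a factorization $f=f_1\cdots f_t$ over $\mathcal{O}_K$. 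Each $f_i$ has initial form matching $\psi_i^{e_i}$. By \cref{thm:theorem of product}, each $f_i$ has a one-sided $\varphi$-polygon of the same slope, and its normalized associated polynomial is $\psi_i^{e_i}$. The main obstacle is making this lifting rigorous, in particular handling reduction modulo $\varphi$ through $\zeta$ while keeping the degrees of the $\varphi$-coefficients below $l$. I would first try the classical route: pass to the completion of $K[X]$ with respect to $\nu$ and apply Hensel's lemma in its graded ring, which is $\mathbb{F}_K(\zeta)[Y,Y^{-1}]$-like.

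\emph{Ramification data in the regular case.} Let $\theta$ be a root of $f_i$ and $L=K(\theta)$. By \cref{thm:theorem of the polygon}(4) we have $v_K(\varphi(\theta))=h/e$. Since $\gcd(e,h)=1$, this gives $e\mid e(L/K)$. Choose $b,c$ with $bh-ce=1$; then $v_K(\varphi(\theta)^b/\pi^c)=1/e$.

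Next I would show $f(L/K)\ge l\deg\psi_i$. The element $\gamma=\varphi(\theta)^e/\pi^h$ is a unit. Evaluating the $\varphi$-expansion at $\theta$, the terms on $S$ dominate and give $\overline{(f_i)_S}(\overline{\gamma})=0$ in $\mathbb{F}_L$, with coefficients in $\mathbb{F}_K(\overline{\theta})\supseteq\mathbb{F}_K(\zeta)$ (taking $\zeta=\overline\theta$). So $\overline{\gamma}$ is a root of $\psi_i$ over $\mathbb{F}_K(\zeta)$, a field of degree $l$ over $\mathbb{F}_K$, and $f(L/K)\ge l\deg\psi_i$.

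Combining the two bounds,
\[
e\cdot l\deg\psi_i\le e(L/K)f(L/K)=[L:K]\le\deg f_i=l\cdot E(S_i)=l\,e\deg\psi_i,
\]
since $E(S_i)=e\cdot d_i$ with $d_i=\deg\psi_i$ by regularity. Hence equality holds throughout. This gives that $f_i$ is irreducible, that $e(L/K)=e$ and $f(L/K)=l\deg\psi_i$, and that $\varphi(\theta)^b/\pi^c$, having valuation $1/e(L/K)$, generates the prime ideal of $\mathcal{O}_L$.
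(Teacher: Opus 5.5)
The paper does not prove this statement. It quotes it from Ore (and Montes--Nart) as background, so there is no in-paper argument to match, and I judge your proposal on its own.

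\textbf{Regular case.} This half is correct and is the classical argument.
\begin{itemize}
\item $v_K(\varphi(\theta))=h/e$ with $\gcd(e,h)=1$ forces $e\mid e(L/K)$.
\item The dominant terms of $\pi^{-H(S_i)}f_i(\theta)$ show that $\overline{\gamma}$ is a root of $\psi_i$.
\item Squeezing against $\deg f_i=l\,e\deg\psi_i$ gives irreducibility, $e(L/K)$, $f(L/K)$ and the uniformizer at once.
\end{itemize}
One precision: $\zeta$ is fixed once and for all when $f_S$ and the $\psi_i$ are defined, so you cannot ``take $\zeta=\overline{\theta}$''. What your computation shows is that $\overline{\gamma}$ is a root of $\psi_i^{\sigma}$, where $\sigma\in\Gal(\mathbb{F}_K(\zeta)/\mathbb{F}_K)$ sends $\zeta$ to $\overline{\theta}$. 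This conjugate is irreducible of the same degree, so $f(L/K)\ge l\deg\psi_i$ survives. You are also right to factor $f_S$ over $\mathbb{F}_K(\zeta)$ rather than $\mathbb{F}_K$.

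\textbf{The gap.} The first assertion is only announced: the factorization $f=f_1\cdots f_t$ with $(f_i)_{S_i}=\psi_i^{e_i}$, on which your whole second half rests. ``Coprime factorizations of the initial form lift by a Hensel iteration'' is exactly what has to be proved, and three ingredients are missing.
\begin{itemize}
\item $\nu$ must be multiplicative. This is MacLane's theorem for the key polynomial $\varphi$, and it is what controls the carries when products of $\varphi$-coefficients are reduced modulo $\varphi$. It ensures $\mathrm{in}_\nu(f)$ corresponds to $f_S(y)$ and initial forms multiply.
\item You need a lifting step solving $R\equiv AG+BF$ modulo higher $\nu$, with $\deg A<\deg F$ and $\deg B<\deg G$, from a Bezout relation in $\mathbb{F}_K(\zeta)[y]$.
\item You need convergence and integrality of the limits.
\end{itemize}
Your shortcut through the $\nu$-adic completion of $K[X]$ does not by itself supply this, since factors obtained there need not be polynomials. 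It is the degree control in the Newton step that produces monic polynomial factors.

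\textbf{A shorter fix.} You can avoid a graded Hensel lemma entirely, using only results stated before this theorem.
\begin{enumerate}
\item Factor $f$ into monic irreducibles $F$ over the complete field $K$. By \cref{thm:theorem of product}, each $F$ has a one-sided polygon $S_F$ of the same slope, and $f_S^{\mathrm{norm}}=\prod_F F_{S_F}^{\mathrm{norm}}$. So it suffices to show each $F_{S_F}$ is a power of a single irreducible.
\item Fix a root $\theta$ of $F$, write $\gamma(\theta')=\varphi(\theta')^e/\pi^h$, and let $\chi_1$ be the irreducible factor with $\chi_1^{\sigma_\theta}(\overline{\gamma(\theta)})=0$, where $\sigma_{\theta'}\colon\zeta\mapsto\overline{\theta'}$.
\item Transporting by $\Gal(\overline{K}/K)$ gives $\chi_1^{\sigma_{\theta'}}(\overline{\gamma(\theta')})=0$ for every root $\theta'$ of $F$.
\item Suppose an irreducible $\chi_2\neq\chi_1$ of degree $D$ also divides $F_{S_F}$. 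Put $G=\pi^{hD}\tilde{\chi}_2(X,\varphi^e/\pi^h)$, with $\tilde{\chi}_2$ a lift of $\chi_2$ whose coefficients have degree $<l$, as in the construction of $f^0$.
\item Since $\chi_2^{\sigma_{\theta'}}(\overline{\gamma(\theta')})\neq 0$, you get exactly $v_K(\mathrm{Res}(F,G))=\sum_{\theta'}v_K(G(\theta'))=\deg F\cdot hD$.
\item On the other hand, every root $\beta$ of $G$ has $v_K(\varphi(\beta))=h/e$ by \cref{thm:theorem of the polygon}. The same residue computation makes $\overline{\gamma(\beta)}$ a root of $\chi_2^{\sigma_\beta}$, hence of $F_{S_F}^{\sigma_\beta}$, so $v_K(F(\beta))>H(S_F)$.
\item This gives $v_K(\mathrm{Res}(F,G))>\deg G\cdot H(S_F)=\deg F\cdot hD$, a contradiction.
\end{enumerate}
Grouping the irreducible factors by their $\chi$ yields the $f_i$. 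In the regular case each $f_i$ is then a single irreducible factor, and your second half supplies the ramification data.
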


    Now, we define the two indices $i_K(f)$ and $i_\varphi(f)$ of a polynomial. 

    \begin{definition}\label{def:index of K}
      For a monic polynomial $f(X) \in \mathcal{O}_K[X]$, we define $i_K(f)$ as follows: 
      \begin{itemize}
        \item If $f(X)$ is irreducible, choose a root $\theta$, and put $L=K(\theta)$. 
          We then define 
          \[
            i_K(f) = v_K([\mathcal{O}_L:\mathcal{O}_K[\theta]])/[K:\Q_p]. 
          \]
        \item For monic irreducible polynomials $g(X),h(X) \in \mathcal{O}_K[X]$, 
          we set 
          \[
             r(g,h) = v_K(\mathrm{Res}(g,h)), 
          \]
          where $\mathrm{Res}(g,h)$ is the resultant of $g(X)$ and $h(X)$. 
        \item Let $f(X)$ be a product of monic irreducible polynomials $f_1(X), \dots ,f_t(X) \in \mathcal{O}_K[X]$. 
          We define 
          \[
            i_K(f) = \sum_{i = 1}^t i_K(f_i) + \sum_{i<j} r(f_i,f_j). 
          \]
      \end{itemize}
    \end{definition}

    \begin{definition}\label{def:index of phi}
      Let $f(X) \in \mathcal{O}_K[X]$ be a monic polynomial, and suppose that the principal part 
      of the $\varphi$-polygon of $f(X)$ consists of the sides $S_1, \dots ,S_r$. 
      For each $i$, denote the length, height, and degree of $S_{i}$ by $E_i$, $H_i$, and $d_i$, respectively. 
      Then, we define 
      \[
        i_{\varphi}(f) = \deg\varphi\cdot\left(\sum_{i = 1}^{r - 1} E_i \cdot 
        \left(\sum_{j = i + 1}^r H_j\right) + \frac{1}{2} \sum_{i = 1}^r 
        (H_i E_i - H_i - E_i + d_i)\right).
      \]
    \end{definition}

    The quantity $i_{\varphi}(f)/\deg\varphi(X)$ represents the number of points of 
    integer coordinates below or on the $\varphi$-polygon of $f$, excluding those lying on both axes. 
    See \cref{fig:1}.

    \begin{figure}
      \centering
      \begin{tikzpicture}
        \draw[->] (0,-0.5) -- (0,3);
        \draw[->] (-0.5,0) -- (5,0);
        \draw (0,2.5) -- (1,1.5) -- (3,0.5) -- (4.5,0);
        \foreach \x in {0.5,1,1.5,2}
          \fill[black] (0.5,\x) circle [radius=0.05];
        \foreach \x in {0.5,1,1.5}
          \fill[black] (1,\x) circle [radius=0.05];
        \foreach \x in {0.5,1}
          \fill[black] (1.5,\x) circle [radius=0.05];
        \foreach \x in {0.5,1}
          \fill[black] (2,\x) circle [radius=0.05];
        \foreach \x in {0.5}
          \fill[black] (2.5,\x) circle [radius=0.05];
        \foreach \x in {0.5}
          \fill[black] (3,\x) circle [radius=0.05];
      \end{tikzpicture}
      \caption{The number of points of integer coordinates}
      \label{fig:1}
    \end{figure}

    Let $f(X) \in \mathcal{O}_K[X]$ be a monic irreducible polynomial. 
    By Hensel's lemma, there exists a monic irreducible polynomial $\overline{\varphi}(Y) \in \field{F}_K[Y]$ 
    such that $\overline{f}(Y) = \overline{\varphi}(Y)^n (n \in \Z_{> 0})$. 
    Assume that $f(X)$ has a one-sided $\varphi$-polygon $S$, and set $e=E(S)/d$, $h=H(S)/d$. 
    By Theorem \ref{thm:theorem of Ore}, there exists a monic irreducible polynomial $\psi(Y) \in \field{F}_K[Y]$ 
    such that $f_S(Y) = \psi(Y)^a (a \in \Z_{> 0})$. Let $\theta$ be a root of $f(X)$ and put $L=K(\theta)$. 
    We define $\gamma_i=\varphi(\theta)^i/\pi^{[iH/E]} \in \mathcal{O}_L \quad (i = 0, \dots ,n - 1)$. 

    \begin{proposition}\label{prop:prop of Montes Nart}\cite[Proposition]{MR1152908}
      Under the above assumptions, the following conditions are equivalent: 
      \begin{enumerate}
        \item $i_K(f) = i_{\varphi}(f).$

        \item $e(L/K) = ea$, $f(L/K) = l \cdot \deg\psi(Y)$ and 
        [$a = 1$ or $\mathfrak{p}_L = \varphi(\gamma_e)\mathcal{O}_L$]. 
      \end{enumerate}
    \end{proposition}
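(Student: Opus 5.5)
The plan is to prove both implications through one intermediate order. Put $\mathcal{O}' = \sum_{i=0}^{n-1}\sum_{j=0}^{l-1} \mathcal{O}_K\,\theta^j\gamma_i$, where $n = \deg f / l$ is the $\varphi$-length of the polygon (so $n = E(S)$). The argument then has three steps.

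\textbf{Step 1: $\mathcal{O}'$ is an order and $i_K(f)-i_\varphi(f)$ measures $[\mathcal{O}_L:\mathcal{O}']$.}
\begin{itemize}
\item Each $\gamma_i$ is integral, since $v_K(\varphi(\theta)) = H/E$ by \cref{thm:theorem of the polygon}. Hence $\mathcal{O}_K[\theta] \subseteq \mathcal{O}' \subseteq \mathcal{O}_L$.
\item Closure under multiplication: $\gamma_i\gamma_k$ equals $\gamma_{i+k}$ times $\pi^{[(i+k)H/E]-[iH/E]-[kH/E]}$, and that exponent is $0$ or $1$. When $i+k \ge n$, rewrite $\varphi(\theta)^n$ through the $\varphi$-expansion of $f$. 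The fact that no point of the polygon lies below $S$ gives exactly the $\pi$-divisibility needed to keep the result in $\mathcal{O}'$.
\item The transition matrix from the basis $\theta^j\varphi(\theta)^i$ of $\mathcal{O}_K[\theta]$ to the basis $\theta^j\gamma_i$ is diagonal with entries $\pi^{[iH/E]}$. So $v_K([\mathcal{O}':\mathcal{O}_K[\theta]])/[K:\Q_p] = l\sum_{i<n}[iH/E]$.
\item This sum is the lattice-point count under $S$, i.e.\ $i_\varphi(f)$ from \cref{def:index of phi} specialised to one side.
\end{itemize}
Therefore $i_K(f) = i_\varphi(f) + v_K([\mathcal{O}_L:\mathcal{O}'])/[K:\Q_p]$. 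In particular $i_K(f) \ge i_\varphi(f)$, and (1) holds iff $\mathcal{O}' = \mathcal{O}_L$.

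\textbf{Step 2: the local structure of $\mathcal{O}'$.}
I would show $\mathcal{O}'$ is local and compute its residue field and maximal ideal. Put $y = \gamma_e$, where $e = E/d$; its reduction is a unit modulo the radical.
\begin{itemize}
\item Using the expansion of $f$ and the definition of $f_S$, show that $\overline{b_d}(\zeta)^{-1}f_S$ evaluated at $y$ lies in the Jacobson radical. Concretely, dividing $f(\theta)=0$ by $\pi^{h_0}$ and reducing gives $f_S(\bar y) = 0$ in $\mathcal{O}'/\mathrm{rad}$.
\item Conclude that $\mathcal{O}'/\mathrm{rad}(\mathcal{O}') \cong \F_K(\zeta)[Y]/(\psi)$, a field of degree $l\deg\psi$ over $\F_K$.
\item Conclude that $\mathrm{rad}(\mathcal{O}')$ is generated by $\pi$, $\varphi(\theta)^b/\pi^c$ with $bh-ce=1$ as in \cref{thm:theorem of Ore}, and a lift $\Psi(y)$ of $\psi(y)$. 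This $\Psi(y)$ is the element the statement writes as ``$\varphi(\gamma_e)$''; I read it as the lift of $\psi$ evaluated at $\gamma_e$.
\end{itemize}

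\textbf{Step 3: the equivalence.}
Since $\mathcal{O}'$ is a one-dimensional local order with fraction field $L$, it is maximal iff its maximal ideal is principal, i.e.\ iff it is a DVR.

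For (1)$\Rightarrow$(2), assume $\mathcal{O}'=\mathcal{O}_L$.
\begin{itemize}
\item Step 2 gives $f(L/K) = l\deg\psi$.
\item Then $e(L/K) = \deg f/f(L/K) = lEa/(l\cdot d\cdot\ldots)$ simplifies to $ea$, using $d = a\deg\psi$ and $E = ed$.
\item If $a>1$, the generators $\pi$ and $\varphi(\theta)^b/\pi^c$ have valuation at least $1/e > 1/(ea)$ in $v_K$-units. So the only possible uniformizer is $\Psi(y)$, which forces $\mathfrak{p}_L = \Psi(y)\mathcal{O}_L$.
\end{itemize}

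For (2)$\Rightarrow$(1), run the argument backwards.
\begin{itemize}
\item If $a=1$, $f$ is regular and \cref{thm:theorem of Ore} already gives a uniformizer inside $\mathcal{O}'$.
\item If $a>1$, the hypothesis puts a uniformizer $\Psi(y)$ inside $\mathcal{O}'$.
\item In both cases the residue fields of $\mathcal{O}'$ and $\mathcal{O}_L$ agree by the hypothesis on $f(L/K)$. Nakayama's lemma, applied to $\mathcal{O}_L/\mathcal{O}'$ with filtration by powers of the uniformizer, then gives $\mathcal{O}'=\mathcal{O}_L$, hence (1).
\end{itemize}

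\textbf{Expected obstacle.} The main difficulty is Step 2: showing that $\mathcal{O}'$ is local with residue field exactly $\F_K(\zeta)[Y]/(\psi)$, and identifying its maximal ideal. This needs careful bookkeeping of the $\varphi$-expansion when reducing products $\gamma_i\gamma_k$ with $i+k\ge n$. I would first try the case $l=1$, $\varphi = X$, where everything is explicit, and then transport the argument using $\theta^j$, $j<l$, as a basis of the unramified part.
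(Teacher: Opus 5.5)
Your proof is correct, but it takes a different route from the one the paper points to. The paper does not prove this proposition: it imports it from Montes--Nart and names the resultant estimate of \cref{lem:valuation of resultant} as the ingredient behind it.

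You avoid resultants entirely. Your key observation is that $\{\theta^j\gamma_i\}$ is an $\mathcal{O}_K$-basis of an order $\mathcal{O}'$ with $\mathcal{O}_K[\theta]\subseteq\mathcal{O}'\subseteq\mathcal{O}_L$ and $\mathrm{length}(\mathcal{O}'/\mathcal{O}_K[\theta])=l\sum_{0<i<E}[iH/E]=i_\varphi(f)$. Hence $i_K(f)-i_\varphi(f)=\mathrm{length}(\mathcal{O}_L/\mathcal{O}')$, and condition (1) simply says that $\mathcal{O}'$ is maximal. Everything else is read off from two facts about $\mathcal{O}'$: its residue field is $\F_K(\zeta)[Y]/(\psi)$, and its maximal ideal is generated by $\pi$, $\varphi(\theta)^b/\pi^c$ and $\Psi(\theta,\gamma_e)$. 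This needs only lying-over and Nakayama.

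What your approach buys is a transparent explanation of both the inequality $i_K\ge i_\varphi$ and the equality case, together with an explicit integral basis and an explicit uniformizer. What the resultant lemma buys instead is control of the cross terms $r(f_i,f_j)$ in \cref{def:index of K}, via $v_K(g(\theta))=v_K(\mathrm{Res}(f,g))/\deg f$ for irreducible $f$. That control is what is needed to pass from this proposition to \cref{thm:theorem of Montes Nart} when $f$ is reducible. It is also the tool the paper reuses in \cref{lem:classification of roots}. Your argument settles the irreducible one-sided case cleanly, but by itself it says nothing about those resultants.

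Three small points.
\begin{enumerate}
\item Step 2 is easier than you expect. $\mathcal{O}'$ is automatically local, because the local ring $\mathcal{O}_L$ is integral over it; hence $\gamma_e\in(\mathcal{O}')^{\times}$. Then $\gamma_{qe+s}=\gamma_e^{q}\gamma_s$. For $0<s<e$, and with $\gamma_b=\varphi(\theta)^b/\pi^c$ for $0<b<e$, one has $\gamma_s=\gamma_b^{k}\gamma_e^{t}$, where $k\equiv sh \pmod e$, $0<k<e$ and $t=(s-kb)/e$. Combined with writing any $G(X,Y)$ with $\psi\mid\overline{G}(\zeta,Y)$ as $\Psi\tilde{Q}$ modulo $(\pi,\varphi(X))$, this gives your generating set.
\item The ring property of $\mathcal{O}'$ is needed only from Step 2 onward, not for the index identity in Step 1.
\item Your ramification computation should read $e(L/K)=lE/(l\deg\psi)=ea$, using $E=ed$ and $d=a\deg\psi$.
\end{enumerate}
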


    To prove the above proposition, Montes and Nart proved the following lemma on the valuation of the resultant. 
    
    \begin{lemma}\label{lem:valuation of resultant}\cite[Lemma 2]{MR1152908}
      Let $f,g \in \mathcal{O}_K[X]$ be monic polynomials of degrees $n$, $n'$, respectively. 
      Assume that the $\varphi$-polygons $S$ and $S'$ of $f$ and $g$ are one-sided, and denote their heights by $H$ and $H'$, respectively. 
      Then, 
      \[
        v_K(\mathrm{Res}(f,g)) \ge \min\{nH',n'H\}. 
      \]
      Moreover, equality holds if and only if either slope of $S$, $S'$ are different or $\mathrm{Res}(f_S,g_{S'}) \ne 0$. 
    \end{lemma}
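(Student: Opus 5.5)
We would prove \cref{lem:valuation of resultant} by working with roots in $\overline{\Q}_p$. Since $f,g$ are monic, $\mathrm{Res}(f,g)=\prod_{\theta,\eta} (\theta-\eta)$, where $\theta$ runs over the roots of $f$ and $\eta$ over those of $g$. We may also write $\mathrm{Res}(f,g)=\prod_{\theta} g(\theta)$. So it suffices to bound $v_K(g(\theta))$ for each root $\theta$ of $f$ and then sum.

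First suppose $\overline{\varphi}$ divides neither $\overline{f}$ nor $\overline{g}$ in the relevant sense. Then the polygons have no principal part and $H=H'=0$, and the claim reduces to the trivial bound $v_K\ge 0$. So we treat the main case $\overline{f}=\overline{\varphi}^{n/l}\cdot(\text{unit part})$. Reducing via \cref{thm:theorem of the polygon} and \cref{thm:theorem of product}, we may assume $\overline f,\overline g$ are powers of $\overline\varphi$, with $n=lE$ and $n'=lE'$. Let $-m=-H/E$ and $-m'=-H'/E'$ be the slopes. By \cref{thm:theorem of the polygon}(4), every root $\theta$ of $f$ satisfies $v_K(\varphi(\theta))=m$.

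Next we expand $g=\sum_j b_j\varphi^j$ and evaluate at $\theta$. The term $b_j(\theta)\varphi(\theta)^j$ has valuation $v_K(b_j)+jm$. Here $v_K(b_j(\theta))=v_K(b_j)$, because $\overline{b_j}$ has degree $<l$ and so is coprime to $\overline\varphi$, and $\overline\theta$ is a root of $\overline\varphi$ (after dividing out by the content of $b_j$). Hence $v_K(g(\theta))\ge \min_j (v_K(b_j)+jm)$. This minimum is the value at $x=0$ of the lowest line of slope $-m$ touching the polygon of $g$. Since the polygon of $g$ is one-sided of slope $-m'$, we get:
\begin{itemize}
\item if $m\ge m'$, the minimum equals $H'$, attained at $j=0$;
\item if $m<m'$, the minimum equals $E'm$, attained at $j=E'$.
\end{itemize}
In either case it is $\min\{H',E'm\}$. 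Summing over the $n=lE$ roots gives $v_K(\mathrm{Res})\ge lE\min\{H',E'H/E\}=\min\{nH',n'H\}$.

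For the equality criterion, observe the following. If $m\ne m'$, the minimum above is attained at a single index, so there is no cancellation and equality holds. If $m=m'$, the dominant terms are those indexed by lattice points on the common line $S'$, and after dividing by $\pi^{H'}$ their sum reduces modulo the maximal ideal to an expression of the form $g_{S'}(\overline{\varphi(\theta)^e/\pi^h})$, up to a unit and a power of the reduction. Thus equality at $\theta$ holds iff $g_{S'}(y_\theta)\ne 0$, where $y_\theta$ is the residue of $\varphi(\theta)^e/\pi^h$. The key point, and the expected main obstacle, is to show that as $\theta$ ranges over the roots of $f$, the residues $y_\theta$ range over the roots of $f_S$ (with multiplicity), identified under $\zeta\mapsto\overline\theta$. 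We would establish this by applying the same computation to $f$ itself: since $f(\theta)=0$, the dominant terms must cancel, forcing $f_S(y_\theta)=0$. A root-counting argument via \cref{thm:theorem of Ore}, which factors $f$ according to the factors of $f_S$, then shows every root of $f_S$ occurs. Given this, equality holds iff no $y_\theta$ is a root of $g_{S'}$, which is exactly $\mathrm{Res}(f_S,g_{S'})\ne0$.

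The remaining care is in the residue-field embedding $\zeta\leftrightarrow\overline\theta$ for different roots. We would handle this by noting that the conditions are Galois-invariant over $\field{F}_K$.
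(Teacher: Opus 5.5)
The paper does not prove this lemma; it quotes it from Montes--Nart \cite[Lemma 2]{MR1152908}, so there is no in-paper argument to compare against. Your root-by-root proof is correct and self-contained. You write $\mathrm{Res}(f,g)=\prod_\theta g(\theta)$ and use $v_K(b_j(\theta))=v_K(b_j)$ together with $v_K(\varphi(\theta))=m$. This gives $v_K(g(\theta))\ge\min\{H',E'm\}$, with a unique minimizing index when $m\neq m'$. When $m=m'$, the residue of $g(\theta)/\pi^{H'}$ is a conjugate of $g_{S'}$ evaluated at $y_\theta=\overline{\varphi(\theta)^e/\pi^h}$, and this gives exactly the claimed inequality and equality criterion.

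Two small points would tighten it.

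\emph{The preliminary case is unnecessary.} In the paper, heights, degrees and associated polynomials are defined only for sides of negative slope. A one-sided $\varphi$-polygon of negative slope for a monic polynomial already forces $\overline f=\overline\varphi^{E}$ and $\overline g=\overline\varphi^{E'}$. So the case $H=H'=0$ lies outside the statement, and you need no reduction via the theorems of the polygon and of the product.

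\emph{The phrase ``the $y_\theta$ range over the roots of $f_S$ with multiplicity'' is not accurate.} There are $led$ roots $\theta$ but only $d$ roots of $f_S$. Moreover, each $y_\theta$ is a root of the twisted polynomial $\sigma_\theta(f_S)$, where $\sigma_\theta\in\mathrm{Gal}(\overline{\F}_p/\F_K)$ satisfies $\sigma_\theta(\zeta)=\overline\theta$. You do not need a multiplicity statement; each direction only needs the following.

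For ``$\mathrm{Res}(f_S,g_{S'})\neq 0\Rightarrow$ equality'', it suffices that $y_\theta$ is a root of $\sigma_\theta(f_S)$ and that
$\mathrm{Res}(\sigma_\theta f_S,\sigma_\theta g_{S'})=\sigma_\theta(\mathrm{Res}(f_S,g_{S'}))\neq 0$.

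For the converse, suppose $\mathrm{Res}(f_S,g_{S'})=0$ and let $\psi$ be a common irreducible factor of $f_S$ and $g_{S'}$ over $\F_K(\zeta)$. Take the factor $f_\psi$ of $f$ supplied by the first part of \cref{thm:theorem of Ore}, and any root $\theta$ of $f_\psi$. Your cancellation argument applied to $f_\psi(\theta)=0$ shows that $y_\theta$ is a root of $\sigma_\theta(\psi)$. Since $\sigma_\theta(\psi)$ divides $\sigma_\theta(g_{S'})$, the residue of $g(\theta)/\pi^{H'}$ vanishes, so $v_K(g(\theta))>H'$ and the inequality is strict.
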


     Using \cref{prop:prop of Montes Nart}, Montes and Nart proved the following theorem, which is a generalization of theorem of Ore.

    \begin{theorem}\label{thm:theorem of Montes Nart}\cite[Theorem 1]{MR1152908}
      Under the assumptions and notation of Theorem \ref{thm:theorem of Ore}, suppose that $i_K(f) = i_{\varphi}(f)$. 
      Then, $f_1(X), \dots ,f_t(X)$ are irreducible, and 
      \[
        e(L/K) = ee_i, f(L/K) = l\cdot\deg\psi_i(Y). 
      \]
    \end{theorem}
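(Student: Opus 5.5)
Our plan is to reduce \cref{thm:theorem of Montes Nart} to \cref{prop:prop of Montes Nart}, applied to each factor $f_i$ separately. The input needed is that the global index condition $i_K(f)=i_\varphi(f)$ forces the analogous equality $i_K(f_i)=i_\varphi(f_i)$ for every $i$, together with the irreducibility of each $f_i$. We begin with the factorization $f=f_1\cdots f_t$ from \cref{thm:theorem of Ore}. Each $f_i$ has a one-sided $\varphi$-polygon $S_i$ of the same slope $-h/e$ as $S$, and $(f_i)_{S_i}=\psi_i^{e_i}$. Then $E(S_i)=e\,e_i\deg\psi_i$ and $H(S_i)=h\,e_i\deg\psi_i$, so $\deg f_i = l\,e\,e_i\deg\psi_i$.

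The key step is a comparison of indices. We first establish the general inequality $i_K(g)\ge i_\varphi(g)$ for any monic $g$ with one-sided $\varphi$-polygon. This is the Ore--Montes--Nart lower bound; it is the trivial half of \cref{prop:prop of Montes Nart}, or follows from \cref{lem:valuation of resultant} by induction over the irreducible factors. Next we decompose each $f_i$ into its irreducible factors $f_{i,k}$. By \cref{thm:theorem of product}, each $f_{i,k}$ again has a one-sided polygon of slope $-h/e$, and its associated polynomial is a power of $\psi_i$. Expanding $i_K(f)$ via \cref{def:index of K}, we obtain
\[
i_K(f)=\sum_{i,k} i_K(f_{i,k})+\sum_{(i,k)<(j,k')} r(f_{i,k},f_{j,k'}).
\]
Each resultant term satisfies $r\ge \min\{nH',n'H\}$ by \cref{lem:valuation of resultant}. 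With equal slopes the two quantities in the minimum coincide, and equality holds exactly when the two associated polynomials are coprime; for factors of different $f_i$ this is automatic, since $\psi_i\ne\psi_j$.

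On the other side, $i_\varphi(f)$ is the lattice-point count of a one-sided polygon. A direct computation from \cref{def:index of phi} shows that this count splits additively: it equals $\sum i_\varphi(f_{i,k})$ plus the cross terms $\deg\varphi\cdot E_{i,k}H_{j,k'}$, and these cross terms match $\min\{nH',n'H\}$ after accounting for $\deg f_{i,k}=l\,E_{i,k}$. We therefore get $i_K(f)-i_\varphi(f)$ as a sum of nonnegative terms. These are the differences $i_K(f_{i,k})-i_\varphi(f_{i,k})$ together with the resultant excesses.

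From $i_K(f)=i_\varphi(f)$ we conclude that every one of these terms vanishes. In particular, two distinct irreducible factors of the same $f_i$ would have associated polynomials that are both powers of $\psi_i$. They would then share a root, and \cref{lem:valuation of resultant} would give a strict inequality for their resultant, which is impossible. Hence each $f_i$ is irreducible. Then $i_K(f_i)=i_\varphi(f_i)$, and \cref{prop:prop of Montes Nart} applied to $f_i$, with $a=e_i$ and $\psi=\psi_i$, gives $e(L/K)=e\,e_i$ and $f(L/K)=l\deg\psi_i$.

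We expect the main obstacle to be the bookkeeping identity: showing that $i_\varphi$ is exactly additive up to the cross terms $l\,E H'$. This requires care with the $d_i$ and $-H-E$ corrections in \cref{def:index of phi} when several segments of the same slope are concatenated. Our first attempt will be to verify it by Pick-type lattice counting on the combined triangle, since the degree of a concatenated segment is the sum of the degrees of its pieces.
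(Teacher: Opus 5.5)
Your proposal is correct. It follows the route the paper points to: the paper gives no proof of its own and only cites Montes--Nart, who deduce the theorem from \cref{prop:prop of Montes Nart}. In your argument, the equal-slope additivity $i_\varphi(gg')=i_\varphi(g)+i_\varphi(g')+\deg\varphi\cdot E_g H_{g'}$ together with \cref{lem:valuation of resultant} writes $i_K(f)-i_\varphi(f)$ as a sum of nonnegative terms. Their vanishing forces each $f_i$ to be irreducible with $i_K(f_i)=i_\varphi(f_i)$, and the proposition with $a=e_i$ finishes.

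One correction. The base inequality $i_K(g)\ge i_\varphi(g)$ for irreducible $g$ is not a ``half'' of \cref{prop:prop of Montes Nart}, which is only an equivalence. Nor does it follow from \cref{lem:valuation of resultant}, which only controls the cross terms. Take it from \cref{thm:theorem of index} with $\Phi=\{\varphi\}$. Alternatively, prove it directly: the $\mathcal{O}_K$-span of the $\theta^j\gamma_i$ ($0\le j<\deg\varphi$) lies in $\mathcal{O}_L$ and contains $\mathcal{O}_K[\theta]$ with quotient of length exactly $i_\varphi(g)$.
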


    We can check the condition $i_K(f) = i_{\varphi}(f)$ as follows.

    We continue to use the above notation. 
    Let $T$ be the unique unramified extension of $K$ of degree $l$. 
    Let $\mathcal{O}_K[X,Y] \to \field{F}_K[X,Y]/(\bar{\varphi}(X)) \cong \field{F}_T[Y]$ be the natural homomorphism, 
    and choose a lift $\Psi_i(X,Y)$ of $\psi_i(Y)$ such that 
    \[
      \Psi_i(X,Y) = \sum_j a_j(X)Y^j,\quad \deg a_j(X) < \deg\varphi(X) \quad \text{or} \quad a_j(X) = 0. 
    \]
    We define 
    \[
      f^0(X) = \pi^H \prod_{i = 1}^t \Psi_i(X,\varphi(X)^e/\pi^h)^{e_i} \in \mathcal{O}_K[X], 
    \]
    and 
    \[
      f^1(X) = f(X) - f^0(X). 
    \]
    Let $\tilde{S}$ be the side $S$ shifted upwards by $1/e$. 
    Then, the $\varphi$-polygon of $f^1(X)$ has no points below $\tilde{S}$. 

    \begin{theorem}\label{thm:equivalent condition of Montes Nart}\cite[Criterion 1]{MR1152908}
      The equality $i_K(f) = i_{\varphi}(f)$ holds if and only if we have either 
      $a_i = 1$ or $\psi_i(Y) \nmid f^1_{\tilde{S}}(Y)$ in $\F_T[Y]$ for all $i$. 
    \end{theorem}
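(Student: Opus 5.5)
The plan is to reduce the criterion to a single irreducible factor with $f_S=\psi^a$, and then to settle that case by one valuation computation at a root $\theta$.

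\emph{Reduction.} I will use the factorization $f=f_1\cdots f_t$ given by \cref{thm:theorem of Ore}, with $(f_i)_{S_i}=\psi_i^{e_i}$.

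1. By \cref{def:index of K}, $i_K(f)=\sum_i i_K(f_i)+\sum_{i<j} r(f_i,f_j)$.
2. Since $\psi_i$ and $\psi_j$ are coprime for $i\neq j$, the associated polynomials have nonzero resultant. \cref{lem:valuation of resultant} then gives $r(f_i,f_j)=\min\{n_iH_j,n_jH_i\}=l\,E_iH_j$, because $n_i=lE_i$ and all $S_i$ have the same slope.
3. Expanding the quadratic expression in \cref{def:index of phi} for the single side $S=\sum S_i$ shows that $i_\varphi(f)=\sum_i i_\varphi(f_i)+\sum_{i<j} l\,E_iH_j$. The cross terms on the two sides therefore agree.
4. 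I will use the general inequality $i_K(g)\ge i_\varphi(g)$ (Ore's lower bound, which I take as known, or prove by the same decomposition). With it, $i_K(f)=i_\varphi(f)$ holds iff $i_K(f_i)=i_\varphi(f_i)$ for every $i$.

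I also need each $f_i$ in the equality case to be irreducible. Suppose $f_i=gh$ nontrivially. Both factors have one-sided polygons of the same slope, and their associated polynomials are powers of the same $\psi_i$. Then \cref{lem:valuation of resultant} gives strict inequality for $r(g,h)$, hence $i_K(f_i)>i_\varphi(f_i)$. So it suffices to treat $f$ irreducible with $f_S=\psi^a$ and $\overline f=\overline\varphi^n$.

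\emph{The irreducible case.} If $a=1$, both conditions hold: by Ore's theorem, condition (2) of \cref{prop:prop of Montes Nart} is satisfied. Assume now $a>1$, let $\theta$ be a root and $L=K(\theta)$. Put $y=\varphi(\theta)^e/\pi^h$. By \cref{thm:theorem of the polygon}(4), $v_K(\varphi(\theta))=h/e$, so $y$ is a unit whose residue is a root of $\psi$ in the residue field of $L$. Evaluating $f=f^0+f^1$ at $\theta$ gives
\[
\pi^H\,\Psi(\theta,y)^a=-f^1(\theta).
\]
Because the $\varphi$-polygon of $f^1$ lies on or above $\tilde S$, the standard lemma on values at $\theta$ gives
\[
v_K(f^1(\theta))\ge H+\tfrac1e .
\]
Equality holds iff the residue of $f^1(\theta)$, suitably normalized by powers of $\varphi(\theta)$ and $\pi$, is nonzero. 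That residue equals, up to a unit, $f^1_{\tilde S}(\overline y)$, so equality holds iff $\psi\nmid f^1_{\tilde S}$. Writing $\nu=v_K(\Psi(\theta,y))>0$, we get $a\nu\ge 1/e$, with equality iff $\psi\nmid f^1_{\tilde S}$.

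It remains to match this with condition (2) of \cref{prop:prop of Montes Nart}.

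\emph{Criterion fails.} If $\psi\mid f^1_{\tilde S}$, then $\nu>1/(ea)$. I then expect to show that $e(L/K)\ne ea$, or else that the residue degree exceeds $l\deg\psi$. The route is to pass to the next-order polygon of $f$ with respect to $\Psi(X,\varphi^e/\pi^h)$.

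\emph{Criterion holds.} If $\nu=1/(ea)$, the element $\varphi(\theta)^b\pi^{-c}$ has valuation $1/e$ (choosing $b,c$ with $bh-ce=1$). Combining this element with $\Psi(\theta,y)$ produces an element of valuation $1/(ea)$, so $e(L/K)\ge ea$. Since $[L:K]=lea\deg\psi$ and the residue field contains $\F_T(\overline y)$, of degree $l\deg\psi$, we get $e(L/K)=ea$ and $f(L/K)=l\deg\psi$. Moreover, this uniformizer is of the form $\varphi(\gamma_e)$ required in the proposition.

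\emph{Main obstacle.} The hardest step is the exact identification of the leading residue of $f^1(\theta)$ with $f^1_{\tilde S}(\overline y)$. This requires care with the shifted line, whose lattice points have ordinates in $\frac1e\Z$ before normalization. Closely tied to it is the converse direction: showing that the uniformizer condition in \cref{prop:prop of Montes Nart} forces $\nu=1/(ea)$. For that I would first try to compare $\varphi(\gamma_e)$ with $\Psi(\theta,y)$ modulo higher powers of $\mathfrak p_L$.
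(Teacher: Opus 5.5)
The paper does not prove this statement; it is quoted from Montes--Nart (Criterion~1), so your argument has to stand on its own. Several parts of it do. Your cross-term bookkeeping ($r(f_i,f_j)=lE_iH_j$ matching the expansion of $i_\varphi$) is correct, as is using Ore's inequality to split the equality over the factors $f_i$, and so is the strict inequality for $r(g,h)$ when $f_i=gh$. The key computation $a\nu\ge 1/e$, with equality iff $\psi\nmid f^1_{\tilde S}$, is also correct. The step you call the main obstacle is routine: the lattice points of $\tilde S$ have abscissas $x_0+je$. Pulling out $\pi^{y_0}\varphi(\theta)^{x_0}$, of valuation $H+1/e$, leaves $\sum_j b_j(\theta)y^j$, whose residue is $f^1_{\tilde S}(\overline y)$.

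\textbf{Main gap.} The implication ``$a>1$ and $\psi\mid f^1_{\tilde S}$ $\Rightarrow$ $i_K(f)>i_\varphi(f)$'' is never proved; you only announce a next-order polygon argument. The step from condition (2) of \cref{prop:prop of Montes Nart} back to $\nu=1/(ea)$ is likewise left open. Both difficulties come from reading the uniformizer condition literally as $\mathfrak p_L=\varphi(\gamma_e)\mathcal O_L$. That is a misprint for $\mathfrak p_L=\tilde\psi(\gamma_e)\mathcal O_L$ with $\tilde\psi$ a lift of $\psi$; this is how the paper itself uses the proposition in the proof of \cref{prop:claim 4}. Taken literally, $\varphi(\gamma_e)$ is in general a unit (for $\varphi=X$ it is just $\gamma_e$). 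So your remark that your uniformizer ``is of the form $\varphi(\gamma_e)$'' is not correct as written.

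Since $\gamma_e=\varphi(\theta)^e/\pi^h=y$, the element $\tilde\psi(\gamma_e)$ is exactly your $\Psi(\theta,y)$. For irreducible $f$ with $a>1$, condition (2) therefore says precisely that $e(L/K)=ea$ and $\nu=1/(ea)$. The residue degree is then forced by $[L:K]=lea\deg\psi$, and changing the lift alters $\Psi(\theta,y)$ only by a term of valuation $\ge 1>1/(ea)$. Both missing implications now take one line each:
\begin{itemize}
\item If $\psi\mid f^1_{\tilde S}$, then $\nu>1/(ea)$. So either $e(L/K)\neq ea$ or $v_L(\Psi(\theta,y))=ea\nu>1$, and (2) fails.
\item Conversely, (2) gives $\nu=1/(ea)$ directly.
\end{itemize}
No second-order polygon is needed.

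\textbf{Reduction.} Your reduction also has a mismatch. The criterion concerns $f^1=f-f^0$ for the whole $f$, but after passing to a factor you work with $f_i^1=f_i-f_i^0$. You never show that $\psi_i\mid f^1_{\tilde S}\iff\psi_i\mid (f_i^1)_{\tilde S}$. The simplest repair is to run your root computation on $f$ itself. At a root $\theta$ of $f_i$, the residue $\overline y$ is a root of $\psi_i$ and not of $\psi_j$ for $j\neq i$. Hence the factors $\Psi_j(\theta,y)$, $j\neq i$, of $f^0(\theta)=\pi^H\prod_j\Psi_j(\theta,y)^{a_j}$ are units. Then $f(\theta)=0$ gives $a_i\nu_i=v_K(f^1(\theta))-H\ge 1/e$, with equality iff $\psi_i\nmid f^1_{\tilde S}$.

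Similarly, in the direction from the criterion to $i_K(f)=i_\varphi(f)$, you cannot assume $f_i$ irreducible in advance, but the same computation gives it. If $\nu_i=1/(ea_i)$ at every root of $f_i$, then $e(K(\theta)/K)\ge ea_i$ and $f(K(\theta)/K)\ge l\deg\psi_i$. Hence $[K(\theta):K]\ge \deg f_i$, so $f_i$ is irreducible.
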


    Let $f(X) \in \mathcal{O}_K[X]$ be a monic polynomial, not necessarily one-sided.
    Let $S_1, \dots ,S_r$ be the sides of the principal part of the $\varphi$-polygon of $f$. 
    By \cref{thm:theorem of the polygon}, $f(X)$ admits a factorization 
    $f(X)=f_1(X) \cdots f_r(X)$ where the $\varphi$-polygon of $f_i$ is $S_{i}$. 
    Since each $f_i$ is one-sided, we can define $f_i^0$ and $f_i^1$ as above. 
    We then define 
    \[
      f^1(X)=f(X)-LC_{\varphi}(f) \cdot \prod_{i=1}^r f_i^0(X), 
    \]
    where $LC_{\varphi}(f)$ is the leading coefficient of the $\varphi$-expansion of $f$. 
    Now, we define $\widetilde{S_{i}}$ again.
    Let $\overline{S}_{i}$ be a line containing $S_{i}$.
    Put $\widetilde{S_{i}}$ be the minimal line segment containing $\overline{S}_{i} \cap (\Z_{\ge 0} \times \Z_{\ge 0})$. 

    \begin{theorem}\label{thm:criterion 2 of Montes Nart}\cite[Criterion 2]{MR1152908}
      We use the above notation. Let $\psi(Y)$ be an irreducible factor of $(f_i)_{S_{i}}$. 
      Then, $\psi(Y)$ divides $(f_i^1)_{\widetilde{S_{i}}}(Y)$ if and only if it divides $(f^1)_{\widetilde{S_{i}}}(Y)$. 
    \end{theorem}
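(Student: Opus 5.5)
The plan is to compare $f^1$ with $f_i^1$ through an explicit algebraic identity, and then compute associated polynomials along the line $\widetilde{S_i}$ using the multiplicativity of associated polynomials. The multiplicativity is the one underlying \cref{thm:theorem of product}: for a product $gh$ and a line of slope $-m_i$ supporting both polygons, the associated polynomial of $gh$ is the product of those of $g$ and $h$, up to a power of $Y$ and a nonzero constant. I will work modulo $\psi(Y)$ throughout. The goal is to show
\[
  (f^1)_{\widetilde{S_i}}(Y) \equiv c\, Y^{s}\,(f_i^1)_{\widetilde{S_i}}(Y) \pmod{\psi(Y)}
\]
for some $c \in \F_T^{\times}$ and $s \ge 0$. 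Since $\psi$ is irreducible and different from $Y$ (as $(f_i)_{S_i}(0)\neq 0$), this gives the equivalence at once.

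\textbf{Step 1 (telescoping).} Write $f = LC_\varphi(f)\prod_j f_j$. I first check that the leading coefficient is a unit and can be absorbed, for instance into $f_1$. Then
\[
  f^1 = LC_\varphi(f)\Bigl(\prod_j (f_j^0+f_j^1) - \prod_j f_j^0\Bigr)
      = LC_\varphi(f)\sum_{k=1}^r f_k^1 \prod_{j<k} f_j^0 \prod_{j>k} f_j .
\]
Call the $k$-th summand $T_k$.

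\textbf{Step 2 (the term $k=i$).} Here the cofactor is $\prod_{j<i} f_j^0\prod_{j>i} f_j$. Every factor has a one-sided $\varphi$-polygon with slope $-m_j \ne -m_i$; for $f_j^0$ this is because its polygon is the same $S_j$. Hence a supporting line of slope $-m_i$ meets each such polygon in a single vertex. Its associated polynomial is therefore a nonzero constant times a power of $Y$. By \cref{thm:theorem of product}, applied in the form that allows one factor ($f_i^1$) to be evaluated on a line that is only supporting rather than a side, the contribution of $T_i$ on $\widetilde{S_i}$ is $c\,Y^s (f_i^1)_{\widetilde{S_i}}$.

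\textbf{Step 3 (the terms $k\neq i$).} Each such $T_k$ contains a factor $f_i$ (if $k<i$) or $f_i^0$ (if $k>i$). Its $\varphi$-polygon is exactly $S_i$, and its associated polynomial is $\prod_l\psi_l^{e_l}$, which is divisible by $\psi$. The other factors, including $f_k^1$, have polygons lying on or above their supporting lines of slope $-m_i$. Hence the part of $T_k$ lying on the line $\widetilde{S_i}$ has associated polynomial divisible by $\psi$, or vanishes if $T_k$ lies strictly above. Adding the terms, which is legitimate because associated polynomials on a fixed line are additive, yields the congruence above.

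\textbf{Main obstacle.} The hard part is the bookkeeping of heights. I have to check that the line $\widetilde{S_i}$ used for $f^1$ is exactly the sum of the supporting lines of the factors of $T_i$, and that no $T_k$ has a point strictly below it. For the second claim, I would use that $f_k^1$ lies above $S_k$ shifted by $1/e_k$, together with the ordering of the slopes $m_1<\dots<m_r$. Convexity then shows that the supporting line of slope $-m_i$ for $f_k^1$ is at least as high as the one for $f_k^0$, so each $T_k$ lies on or above the corresponding line for $\prod_j f_j = f$. I would first verify this on a two-sided example before writing the general argument.
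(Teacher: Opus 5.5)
Your approach matches the computation the paper does in Section~3.2; the paper itself only quotes this criterion from Montes--Nart and gives no proof. There, the term in which $f_i^1$ is multiplied by factors of slopes $\ne -m_i$ contributes a monomial times $(f_i^1)_{\widetilde{S_i}}$. Every other term contains $f_i$ or $f_i^0$, so it contributes a multiple of $(f_i)_{S_i}=\prod_l\psi_l^{a_l}$. Your telescoping identity is cleaner than the paper's expansion, which omits the terms with three or more factors $f_j^1$. Also, since $f=f_1\cdots f_r$ with all $f_j$ monic, $LC_\varphi(f)=1$, so nothing needs to be absorbed in Step~1.

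The gap is in the height bookkeeping you flagged as the main obstacle. Let $L$ be the line through $S_i$, so that $\widetilde{S_i}=L+1/e_i$. For $k\ne i$, Step~3 and the additivity need two things:
\begin{itemize}
\item every point of the $\varphi$-polygon of $T_k$ lies on or above $\widetilde{S_i}$;
\item the sum $L_k$ of the supporting lines of slope $-m_i$ of the factors of $T_k$ is either $\widetilde{S_i}$ itself or strictly above it.
\end{itemize}
Your sketch only shows that the supporting line of $f_k^1$ is ``at least as high'' as that of $f_k^0$, i.e.\ that $T_k$ lies on or above $L$. That is $1/e_i$ too weak. It allows points of $T_k$ on $L$, below $\widetilde{S_i}$, where neither the multiplicativity nor the term-by-term reduction of residues is available.

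The sharper bound you do have is that $f_k^1$ lies on or above $S_k$ raised by $1/e_k$, hence $L_k$ lies at least $1/e_k$ above $L$. On its own this still fails when $e_k>e_i$.

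The missing ingredient is integrality. Write $m_i=h_i/e_i$. Then $e_iy+h_ix$ is an integer at every lattice point and is constant on $L$, so a lattice point strictly above $L$ lies on or above $L+1/e_i=\widetilde{S_i}$. Now $L_k$ passes through a lattice point, namely the sum of a lattice point of $S_i$ and the touching vertices of the other factors, and it lies strictly above $L$. Hence $L_k$ is either $\widetilde{S_i}$ or strictly above it, and Steps~2--3 and the additivity then go through as you wrote.

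The ordering of the slopes plays no role in this argument. Note also that for sides listed from left to right one has $m_1>\dots>m_r$, not $m_1<\dots<m_r$.
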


    To apply \cref{thm:theorem of Montes Nart} to the many-sided case, 
    we have to check the condition on indices. 

    \begin{definition}
        Let $f(X) \in \Oint_{K}[X]$.
        We call a set $\{\varphi_{i}(X)\}_{i \in I}$ of monic irreducible polynomials in $\Oint_{K}[X]$
        a \emph{set of irreducible pullbacks} of $f$
        if $\{\overline{\varphi_{i}}(X)\}_{i \in I}$ coincides with 
        the set of the distinct irreducible factors of $\overline{f}(X) \in \F_{K}[X]$.
    \end{definition} 

    The following theorem is an analogue of \cref{thm:equivalent condition of Montes Nart}. 

    \begin{theorem}[{\cite[Theorem 2]{MR1152908} \cite[Theorem 3.1]{index}}]\label{thm:theorem of index}
        Let $\Phi$ be a set of irreducible pullbacks of $f$.
        Then,
        \[
            \iK(f) \ge \sum_{\varphi \in \Phi} \iphi(f). \label{1:ineq:thm}
        \]
        Moreover, the equality holds if and only if for any $\varphi(X) \in \Phi$
        we have either $\Psi(Y)^2 \nmid f_{S}(Y)$ or $\Psi(Y) \nmid f^{1}_{\tilde{S}}(Y)$
        for any side $S$ of the principal parts
        of the $\varphi$-polygons and
        for any irreducible factor $\Psi(Y)$ of $f_{S}(Y)$.
    \end{theorem}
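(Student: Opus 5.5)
We reduce to the local one-sided situation handled by \cref{thm:equivalent condition of Montes Nart}, using additivity of both indices.

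\textbf{Step 1: additivity of $\iK$.} Factor $f=\prod_{\varphi\in\Phi}F_\varphi$ over $\Oint_K$ by Hensel's lemma, with $\overline{F_\varphi}$ a power of $\overline{\varphi}$. Then factor each $F_\varphi$ by \cref{thm:theorem of the polygon} into one-sided pieces $F_{\varphi,S}$, one for each side $S$ of the principal part. Finally, by \cref{thm:theorem of Ore}, factor each $F_{\varphi,S}$ into pieces $F_{\varphi,S,\psi}$ indexed by the irreducible factors $\psi$ of $f_S$. By \cref{def:index of K}, $\iK(f)$ is the sum of the $\iK$ of the irreducible factors plus the pairwise resultant valuations, so $\iK$ is additive up to resultant terms. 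Factors belonging to different $\varphi$ have coprime reductions, so their resultants are units and contribute $0$.

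\textbf{Step 2: additivity of $\iphi$.} Fix $\varphi$. For two one-sided factors with different slopes $-m_i,-m_j$ (ordered so that side $i$ precedes $j$), \cref{lem:valuation of resultant} gives the exact value $\min\{nH',n'H\}$. The minimum is attained by $\deg\varphi\cdot E_iH_j$. Summing these recovers precisely the cross term $\deg\varphi\sum_{i<j}E_iH_j$ of \cref{def:index of phi}. For the same side $S$ but different $\psi\neq\psi'$, we have $\mathrm{Res}(\psi^{a},\psi'^{a'})\neq 0$, so the lemma again gives equality. A lattice-point count shows that these cross terms, together with the one-sided $\iphi$ of the pieces $F_{\varphi,S,\psi}$, add up to $\iphi(F_{\varphi,S})$. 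Granting this, we get
\[
\iK(f)-\sum_{\varphi\in\Phi}\iphi(f)=\sum_{\varphi,S,\psi}\bigl(\iK(F_{\varphi,S,\psi})-\iphi(F_{\varphi,S,\psi})\bigr).
\]
Here one must check that $\iK$ of a reducible $F_{\varphi,S,\psi}$ is handled by the same lemma applied inside it. Alternatively, one first proves the inequality $\iK\ge\iphi$ for each one-sided piece via \cref{prop:prop of Montes Nart}-type arguments (the lattice count bounds the index from below).

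\textbf{Step 3: reduction to the one-sided case.} By Step 2, the inequality \eqref{1:ineq:thm} follows termwise. Equality holds if and only if $\iK(F_{\varphi,S,\psi})=\iphi(F_{\varphi,S,\psi})$ for every triple. For the one-sided polynomial $F_{\varphi,S}$, \cref{thm:equivalent condition of Montes Nart} says this holds exactly when, for each $\psi$, either $a_\psi=1$ (that is, $\psi^2\nmid f_S$) or $\psi\nmid (F_{\varphi,S})^1_{\tilde S}$. Finally, translate the condition from $F_{\varphi,S}$ to $f$:
\begin{itemize}
\item multiplying by the factors attached to other $\varphi'$ changes neither the $\varphi$-polygon side $S$ nor the associated polynomials, up to a unit constant, by \cref{thm:theorem of product};
\item \cref{thm:criterion 2 of Montes Nart} shows that $\psi$ divides $(f_i^1)_{\widetilde{S_i}}$ if and only if it divides $(f^1)_{\widetilde{S_i}}$.
\end{itemize}
This gives the stated criterion.

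\textbf{Main obstacle.} The delicate point is the bookkeeping in Step 2. The lattice-point count $\iphi$ is not literally additive across the $\psi$-factors of a single side: for pieces of the same slope, the cross terms $\min\{nH',n'H\}$ and the terms $\tfrac12(H_iE_i-H_i-E_i+d_i)$ must combine exactly into the count for the merged side. I would verify this identity by direct computation with $E=\sum E_\psi$, $H=\sum H_\psi$, $d=\sum d_\psi$, all on the same slope $h/e$. I would also make sure that the shift of the $f^1$-construction by $LC_\varphi(f)$ does not affect divisibility by $\psi$.
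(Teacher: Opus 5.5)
The paper gives no proof of this statement; it is quoted from Montes--Nart. Your reduction is the standard one:
\begin{itemize}
\item split $f$ by Hensel, then by \cref{thm:theorem of the polygon}, then by the residual factorization of \cref{thm:theorem of Ore};
\item note that pieces attached to different $\overline{\varphi}$ have unit resultants;
\item use the equality case of \cref{lem:valuation of resultant} to identify the remaining cross resultants with the cross terms of $\iphi$;
\item apply \cref{thm:equivalent condition of Montes Nart} to each $F_{\varphi,S}$, and \cref{thm:criterion 2 of Montes Nart} to transport the condition back to $f$.
\end{itemize}
The bookkeeping you flag as the main obstacle is routine. For $\psi\ne\psi'$ on the same side, with $E_\psi=ed_\psi$ and $H_\psi=hd_\psi$, the cross resultant is $\deg\varphi\cdot eh\,d_\psi d_{\psi'}$. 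After dividing by $\deg\varphi$,
\[
\sum_\psi \tfrac12\bigl(eh\,d_\psi^2-(e+h-1)d_\psi\bigr)+eh\sum_{\psi<\psi'}d_\psi d_{\psi'}=\tfrac12\bigl(eh\,d^2-(e+h-1)d\bigr)=\tfrac12(HE-H-E+d).
\]

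\textbf{The gap.} You never show that the local defect $\iK(g)-\iphi(g)$ of a one-sided piece $g$ is nonnegative. Your identity only writes the global defect as a sum of local defects. Without their nonnegativity you get neither the inequality $\iK(f)\ge\sum_{\varphi\in\Phi}\iphi(f)$, nor the step ``global equality implies equality for every $F_{\varphi,S}$'' that must come before any use of \cref{thm:equivalent condition of Montes Nart}. None of the results you invoke provides it: \cref{prop:prop of Montes Nart} and \cref{thm:equivalent condition of Montes Nart} only characterize \emph{when} equality holds.

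\textbf{The missing idea is an integrality argument.} Take $g$ irreducible with $\overline{g}=\overline{\varphi}^E$, a root $\theta$, and $L=K(\theta)$.
\begin{itemize}
\item The elements $\theta^j\varphi(\theta)^i$ with $0\le j<\deg\varphi$ and $0\le i<E$ form an $\Oint_K$-basis of $\Oint_K[\theta]$. They are the monic polynomials $X^j\varphi(X)^i$, of distinct degrees $j+i\deg\varphi<\deg g$, evaluated at $\theta$.
\item Since $v_K(\varphi(\theta))=H/E$ by \cref{thm:theorem of the polygon}, the elements $\theta^j\varphi(\theta)^i/\pi^{\lfloor iH/E\rfloor}$ lie in $\Oint_L$. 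These are $\theta^j$ times the $\gamma_i$ defined before \cref{prop:prop of Montes Nart}.
\item They span an $\Oint_K$-module $M$ with $\Oint_K[\theta]\subseteq M\subseteq\Oint_L$.
\item $M/\Oint_K[\theta]$ has $\Oint_K$-length $\deg\varphi\sum_{i=0}^{E-1}\lfloor iH/E\rfloor=\deg\varphi\cdot\tfrac12(HE-H-E+d)=\iphi(g)$.
\end{itemize}
Since $\iK(g)$ is the $\Oint_K$-length of $\Oint_L/\Oint_K[\theta]$, this gives $\iK(g)\ge\iphi(g)$. For reducible one-sided $g$, all irreducible factors have the same slope by \cref{thm:theorem of product}. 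Combine the irreducible case with the \emph{inequality} half of \cref{lem:valuation of resultant} and the identity displayed above.

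\textbf{A smaller point.} \cref{thm:criterion 2 of Montes Nart} concerns a polynomial whose reduction is a power of $\overline{\varphi}$. The theorem of the product controls $f_S$ but not $f^1$, which is not multiplicative. So when $\overline{f}$ has other irreducible factors, you still have to say how $f^1$ is formed. You also have to check that the cofactor $G$ (with $\overline{G}$ prime to $\overline{\varphi}$) contributes only a nonzero constant along $\tilde S$.
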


  \subsection{Works of Kölle and Schmid}\label{sec:Kölle Schmid}

    Now, we review the theorem of Kölle and Schmid. 
    Throughout this section, we suppose that $\overline{f}(X) = X^n$ and $\varphi(X) = X$. 

    In this section, we assume that $K$ is a (global) number field. 
    Let $f(X)=\sum_{i = 0}^n a_i X^{n - i} \in \mathcal{O}_K[X]$ be a polynomial, $L$ be the splitting field of $f(X)$, and $G = \Gal_K(f) = \Gal(L/K)$. 
    Let $\mathfrak{p}$ be a prime ideal of $K$. 
    We denote the set of all roots of $f$ by $Z_f$. Then $G$ acts on $Z_f$. 
    Let $\mathfrak{P}$ be a prime ideal of $L$ lying above $\mathfrak{p}$, and $G_\mathfrak{P}$ and $I_\mathfrak{P}$ be the 
    decomposition group and the inertia group of $L/K$ at $\mathfrak{P}$, respectively. 
    Then, $G_{\mathfrak{P}} = \Gal(L_{\mathfrak{P}}/K_{\mathfrak{p}})$. 
    Denote the set of roots of $f$ whose $\mathfrak{p}$-adic valuation is equal to $-m$ by $Z_{f,m}$. 
    Then $G_{\mathfrak{P}}$ acts on $Z_{f,m}$. 
    We set $G_{\mathfrak{P}}^{Z_{f,m}} = G_{\mathfrak{P}}/\mathrm{Stab}_{G_{\mathfrak{P}}}(Z_{f,m})$. 
    $G_{\mathfrak{P}}^{Z_{f,m}}$ also acts on $Z_{f,m}$. 
    Let $S_m$ be the side of $\varphi$-polygon $S$ of $f(X)$ with slope $m$. 
    Put $e=E(S_m)/d$ and $h=H(S_m)/d$, and let $(s,v_{\mathfrak{p}}(a_s))$ be its initial point. 
    We define $f_m =  \displaystyle{\sum_{(i,v_{\mathfrak{p}}(a_i)) \in S_m}}  (-1)^i a_s^{-1} a_{i}X^{de-i}$. 
    Let the distinct irreducible factors of $f_S^{\mathrm{norm}}$ over $\F_{K_{\mathfrak{p}}}$ have 
    degrees $d_1, \dots ,d_r$ (so that $\sum_{i = 1}^r d_i = d = \deg (f_S^{\mathrm{norm}})$). 
    
    The following is the theorem of Kölle and Schmid. 

    \begin{theorem}[{\cite[Theorem]{MR2102807},\cite{MR2557856}}]\label{thm:theorem of Kölle Schmid}
      We use the above notation, and suppose that $f$ is regular. 
      Then, $f_m$ is separable and $|Z_{f,m}| = de$. 
      If in addition $\mathfrak{p} \nmid e$, then the following hold:
      \begin{enumerate}
        \item For every root $\beta$ of $f_m$, there exists $\theta \in Z_{f,m}$ such that $K_{\mathfrak{p}}(\beta) = K_{\mathfrak{p}}(\theta)$, and vice versa. 
        \item $G_{\mathfrak{P}}^{Z_{f,m}}$ is permutation isomorphic to $\Gal_{K_\mathfrak{p}}(f_m)$. 
        \item $I_{\mathfrak{P}}^{Z_{f,m}}$ is cyclic and generated by an element which is the product of $d$ disjoint $e$-cycles on $Z_{f,m}$. 
        \item $G_{\mathfrak{P}}^{Z_{f,m}} = \langle \sigma,\tau \rangle$ has just $r$ orbits of sizes $d_1 e, \dots ,d_r e$ 
          and is a metacyclic group, with $\sigma^{-1}\tau\sigma = \tau^{N\mathfrak{p}}$. 
          Let $\mu = \mathrm{lcm}(\omega,d_1, \dots ,d_r) \quad (\omega = \mathrm{ord}_e N\mathfrak{p})$, the order of the (cyclic) group $G_{\mathfrak{P}}^{Z_{f,m}}/I_{\mathfrak{P}}^{Z_{f,m}}$ 
          is divisible by $\mu$, and it is a divisor of $\mu e$. 
          This order is equal to $\mu$ if $e = 1$ and $d = 1$, and if $r = 1$ and $\gcd (\omega,d) = 1$. 
      \end{enumerate}
    \end{theorem}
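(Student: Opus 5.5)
We argue locally throughout. Since $G_{\mathfrak{P}}=\Gal(L_{\mathfrak{P}}/K_{\mathfrak{p}})$ and the action of $G_{\mathfrak{P}}$ on roots depends only on $f$ over $K_{\mathfrak{p}}$, we replace $K$ by $K_{\mathfrak{p}}$ and $L$ by the splitting field of $f$ over $K_{\mathfrak{p}}$. The plan is to compare $f$ on the side $S_m$ with the "model" polynomial $f_m$, which is essentially a binomial-type polynomial in $X^e$ (after the substitution $X\mapsto 1/X$ implicit in the definition). The key steps are the following.

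\emph{Step 1: the roots on $S_m$.} By \cref{thm:theorem of the polygon} we factor $f=\prod f_{(m')}$, where the factor $f_{(m)}$ has one-sided polygon of length $de$, and its roots are exactly $Z_{f,m}$; hence $|Z_{f,m}|=de$. Next we compute the Newton polygon of $f_m$. Every monomial of $f_m$ has coefficient of valuation lying on the line $S_m$, shifted so that the polygon becomes a single side of slope $h/e$. Its associated polynomial is $f_S^{\mathrm{norm}}$ up to the reversal and sign twist coming from $(-1)^i$ and $X^{de-i}$. Thus $f_m$ is regular, because $f$ is. By \cref{thm:theorem of Ore}, $f_m$ then factors into irreducible factors $g_1,\dots,g_r$ matching the irreducible factors $\psi_1,\dots,\psi_r$ of degrees $d_j$. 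Each $g_j$ has $e(\cdot)=e$ and $f(\cdot)=d_j$, so $f_m$ is separable and its roots are distinct.

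\emph{Step 2: matching fields (part 1).} Fix $\theta\in Z_{f,m}$, a root of the factor of $f_{(m)}$ attached to $\psi_j$. Choose the root $\beta$ of $g_j$ whose residue datum $\overline{\beta^{e}\pi^{h}}$ (or its reciprocal, depending on normalization) agrees with that of $\theta$; it is a root of $\psi_j$ in $\F_{K}(\zeta)$. Since $\mathfrak p\nmid e$, we apply Krasner's lemma or Hensel's lemma to $X^e-u$ over the unramified extension $K_j$ of degree $d_j$, where $u$ is a unit lift. This shows that $K(\theta)$ and $K(\beta)$ both equal $K_j(\sqrt[e]{\pi^{c}u})$ for suitable $c$ with $\gcd(c,e)=1$; such fields are determined by $u$ modulo $(K_j^{\times})^{e}\,(1+\mathfrak{p})$. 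The tameness hypothesis is exactly what makes the residue class determine the field. The converse direction is symmetric.

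\emph{Step 3: permutation isomorphism (part 2).} Let $M$ and $M'$ be the splitting fields of $f_{(m)}$ and $f_m$. By Step 2 we have $M=M'$, since each is the compositum of the fields generated by the respective roots. For $\sigma\in\Gal(M/K)$, the map $\theta\mapsto\beta$ is defined by the residue invariant $\overline{\theta^{e}/\pi^{\cdot}}$ together with the "$e$-th root" class. Concretely, write $\theta=\beta(1+\epsilon)$ with $v(\epsilon)>0$. Uniqueness of the closest root then makes the bijection $Z_{f,m}\to Z_{f_m}$ Galois equivariant: this is Krasner separation, using the fact that distinct roots of $f_m$ are far apart compared with $v(\theta-\beta)$. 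This equivariance gives the permutation isomorphism. I expect this equivariance estimate to be the main obstacle. I would first try to prove $v(\theta-\beta)>\max_{\beta'\ne\beta}v(\beta-\beta')$ by means of \cref{lem:valuation of resultant} applied to $f_{(m)}$ and $f_m$ and their linear twists.

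\emph{Step 4: structure (parts 3 and 4).} It now suffices to treat $f_m$, a product of tame Kummer-type polynomials over unramified extensions. The inertia group of $K_j(\sqrt[e]{\pi^{c}u})$ is generated by $\tau:\sqrt[e]{\cdot}\mapsto\zeta_e\sqrt[e]{\cdot}$. This $\tau$ acts on the $d_je$ roots of $g_j$ as $d_j$ disjoint $e$-cycles, one for each residue root, giving $d$ $e$-cycles in total. Frobenius $\sigma$ satisfies $\sigma^{-1}\tau\sigma=\tau^{N\mathfrak p}$, by its action on $\mu_e$, and the orbits are the root sets of the $g_j$. The unramified quotient must contain $\F_K(\zeta_e,\text{roots of }\psi_j)$, so its order is divisible by $\mu$. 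It divides $\mu e$ because adjoining the $e$-th roots $\sqrt[e]{u}$ of units adds at most $e$ to the residue degree. In the special cases ($e=d=1$, or $r=1$ with $\gcd(\omega,d)=1$), the equality follows by checking that $u$ is already an $e$-th power in the relevant unramified field.
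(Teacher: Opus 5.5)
Your Step 2 does not work as written, and the estimate that would repair it is exactly the one you leave open in Step 3.

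\textbf{Step 2.} The residue datum $\overline{\theta^e/\pi^h}$ does not pick out a root $\beta$: $e$ roots of $g_j$ share it. Nor does it determine the field. Your claim that $K_j(\sqrt[e]{\pi^c u})$ is determined by $u$ modulo $(K_j^\times)^e(1+\mathfrak p)$ holds only up to $K_j$-conjugacy. The $e$ roots of $X^e-\pi^c u$ generate one and the same field only when $\zeta_e\in K_j$. For instance, over $\Q_5$ the three roots $\zeta_3^k\sqrt[3]{5}$ of $X^3-5$ all have residue datum $1$. Yet they generate three distinct fields, since $\zeta_3\notin\Q_5$ and a totally ramified cubic extension cannot contain the unramified quadratic one. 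So a $\beta$ chosen by residue datum alone can have $K(\beta)\neq K(\theta)$.

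The matching that works is the finer one from your Step 3: $\theta/\beta\equiv 1$, i.e.\ $v(\theta-\beta)>h/e$. With that choice your Hensel argument goes through. Both $K(\theta)$ and $K(\beta)$ equal $K_j(\eta)$ for the same root $\eta$ of $X^e-\pi^h u$, because the two candidate $\eta$'s differ by an $e$-th root of unity that is $\equiv 1$, hence equal to $1$ as $\mathfrak p\nmid e$.

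\textbf{Step 3.} Parts (1) and (2) therefore both rest on the claim you explicitly leave unproved: a root $\beta$ of $f_m$ with $v(\theta-\beta)>h/e$ exists and is unique. This is the crux, and in the regular case it is short.
\begin{enumerate}
\item Write $f_m(X)=\pm\pi^{dh}G(X^e/\pi^h)$ with $\overline{G}$ the separable residual polynomial. Since $\mathfrak p\nmid e$, $v(f_m'(\beta))=(de-1)h/e$ for every root $\beta$.
\item Each of the $de-1$ differences $\beta-\beta'$ has valuation at least $h/e$, so all of them equal $h/e$.
\item By \cref{thm:theorem of the polygon}, $\overline{\theta^e/\pi^h}$ is a root of the same residual polynomial. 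This holds after matching the sign convention built into $f_m$, which at most replaces $\beta$ by $-\beta$ and leaves $K(\beta)$ unchanged. Hence $v(f_m(\theta))>dh$.
\item So some $\beta$ has $v(\theta-\beta)>h/e$, and by step 2 it is unique.
\item Krasner's lemma and the equality of degrees $d_je$ give $K(\theta)=K(\beta)$. Uniqueness makes $\theta\mapsto\beta$ Galois-equivariant.
\end{enumerate}
The paper only quotes this theorem from Kölle--Schmid, but this is exactly the mechanism of its proof of \cref{prop:claim 4} specialized to $a=1$: closeness from the value at $\theta$, separation from the derivative, then Krasner. The paper also does not need the equivariance estimate for (2). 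Once the splitting fields agree and $K(\theta)=K(\beta)$ for roots in corresponding orbits, the stabilizers $\Gal(L/K(\theta))=\Gal(L/K(\beta))$ coincide, and that already gives the permutation isomorphism, as in the proof of \cref{thm:our main theorem}(ii).

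\textbf{Step 4.} ``Adds at most $e$'' only bounds $|G_{\mathfrak P}/I_{\mathfrak P}|$ by $\mu e$; it does not give divisibility. For that, use the exponent argument. Over the unramified extension $T$ of degree $\mu$, which contains $\zeta_e$ and every $K_j$, the extension $L/T$ is Kummer of exponent dividing $e$. So its unramified part is cyclic of exponent dividing $e$, hence has degree dividing $e$.
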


\section{Main theorem}\label{sec:Main theorem}

  \subsection{One-sided case}\label{sec:One-sided}

  In this section, we prove a generalization of the theorem of Kölle and Schmid for polynomials having a one-sided $\varphi$-polygon. 
  Throughout this section, we assume that $\overline{f}(X) = X^n$ and $\varphi(X) = X$. 

  To state the theorem, we define the following polynomial. 
  We use the notation of \cref{sec:Prelim}. 
  Let $f(X) \in \mathcal{O}_K[X]$ be a monic polynomial having a one-sided polygon $S$. 
  We define $\tilde{f}^1(X) \in \mathcal{O}_K[X]$ by $\tilde{f}^1(X) = (f^1)^0(X)$, 
  using the definition of \cref{sec:Ore Montes Nart} applied to $f^1$ and its $\varphi$-polygon $\tilde{S}$. 
  We put $\tilde{f}=f^0 + \tilde{f}^1$. 

  In the following theorem, we use the notation of \cref{sec:Kölle Schmid}. 

  \begin{theorem}\label{thm:our main theorem}
      Suppose that $f$ has a one-sided $\varphi$-polygon and $i_{K_{\mathfrak{p}}}(f) = i_{\varphi}(f)$. 
      Then, $\tilde{f}$ is separable, $f$ has $de$ roots, and the $\mathfrak{p}$-adic valuations of all roots of $f$ are equal to $m$. 
      Moreover, let $f_S = \psi_1^{a_1} \cdots \psi_r^{a_r}$ be the irreducible factorization over $\field{F}_{K_{\mathfrak{p}}}$, 
      and put $a = \mathrm{lcm}\{a_1, \dots ,a_r\}$. 
      If in addition $\mathfrak{p} \nmid ea$, then the following hold:
      \begin{enumerate}
        \item For every root $\beta$ of $\tilde{f}$, there exists $\theta \in Z_f$ such that $K_{\mathfrak{p}}(\beta) = K_{\mathfrak{p}}(\theta)$, and vice versa. 
        \item $G_{\mathfrak{P}}$ is permutation isomorphic to $\Gal_{K_\mathfrak{p}}(\tilde{f})$. 
        \item $I_{\mathfrak{P}}$ is cyclic and generated by an element which is the product of $d$ disjoint $ea$-cycles on $Z_f$. 
        \item $G_{\mathfrak{P}} = \langle \sigma,\tau \rangle$ has just $r$ orbits of sizes $d_1 ea_1, \dots ,d_r ea_r$ 
          and is a metacyclic group, with $\sigma^{-1}\tau\sigma = \tau^{N\mathfrak{p}}$. 
          Let $\mu = \mathrm{lcm}(\omega,d_1, \dots ,d_r) \quad (\omega = \mathrm{ord}_{ea} N\mathfrak{p})$, the order of the (cyclic) group $G_{\mathfrak{P}}/I_{\mathfrak{P}}$ 
          is divisible by $\mu$, and it is a divisor of $\mu ea$. 
          % This order is equal to $\mu$ if $ea = 1$ or $d = 1$, and if $r = 1$ and $\gcd (\omega,d) = 1$. 
      \end{enumerate}
    \end{theorem}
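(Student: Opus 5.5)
The plan is to work throughout over the completion $K_{\mathfrak{p}}$, so that $G_{\mathfrak{P}}=\Gal(L_{\mathfrak{P}}/K_{\mathfrak{p}})$ is the Galois group of $f$ over $K_{\mathfrak{p}}$ acting on $Z_f$. The proof has two halves. First we show that $f$ and $\tilde{f}$ have the same local arithmetic. Then we compute $\Gal_{K_\mathfrak{p}}(\tilde{f})$ explicitly, because $\tilde f$ is essentially a binomial-type polynomial in $X^e/\pi^h$.

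\textbf{Step 1 (factorization and valuations).} Since $i_{K_{\mathfrak{p}}}(f)=i_\varphi(f)$, \cref{thm:theorem of Montes Nart} factors $f=f_1\cdots f_r$ into irreducibles. Each $f_i$ has the same slope as $S$, has associated polynomial $\psi_i^{a_i}$, and satisfies $e(K_{\mathfrak p}(\theta_i)/K_{\mathfrak p})=ea_i$ and $f=\deg\psi_i$. By \cref{thm:theorem of the polygon}(4), every root has valuation equal to the slope $m=h/e$. The degree count $\sum d_i e a_i=de$ follows from $\deg f_S=d$.

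\textbf{Step 2 ($\tilde f$ satisfies the same hypotheses as $f$).} By construction $\tilde f=f^0+(f^1)^0$. Its polygon is still $S$ with $\tilde f_S=f_S$. Its component along $\tilde S$ coincides with $f^1_{\tilde S}$, because $f-\tilde f=f^1-(f^1)^0$ has polygon strictly above $\tilde S$. Hence \cref{thm:equivalent condition of Montes Nart} gives $i(\tilde f)=i_\varphi(\tilde f)$, and Step 1 applies to $\tilde f$ as well. Separability of $\tilde f$ then follows from this index equality, since a repeated factor would contribute an infinite resultant, or equivalently a nonzero discriminant is forced by finiteness of $i_K$.

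\textbf{Step 3 (matching fields, part 1).} Next we show $f-\tilde f$ is small enough for Krasner's lemma to apply. For a root $\theta$ of $f_i$, estimate $v(\tilde f(\theta))=v((f-\tilde f)(\theta))$ from the polygon of $f-\tilde f$: every term lies above $\tilde S$, i.e. above $S$ shifted by $1/e$ plus one more step. On the other side, $v(\tilde f(\theta))=\sum_\beta v(\theta-\beta)$. Combining these with the Montes--Nart resultant estimates (\cref{lem:valuation of resultant}) for the pair $\tilde f_i$, $f_i$, which share $\psi_i$ and satisfy the $\tilde S$-criterion, we aim to find a root $\beta$ of $\tilde f_i$ with $v(\theta-\beta)>\max_{\beta'\neq\beta}v(\beta-\beta')$. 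Krasner's lemma then gives $K_{\mathfrak p}(\beta)\subseteq K_{\mathfrak p}(\theta)$, and the degrees are equal. Doing this uniformly for all roots yields a bijection $Z_f\to Z_{\tilde f}$ compatible with $\Gal(\overline{K}_{\mathfrak p}/K_{\mathfrak p})$, which gives (1) and the permutation isomorphism (2).

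This step is the main obstacle. The Ore case works because separability of $f_S$ separates the roots at first order, whereas here the roots of each $f_i$ may agree to first order, with $a_i>1$. The plan is to rescale by setting $y=\theta^e/\pi^h$, so that the roots of $\tilde f$ are governed by $\psi_i(y)^{a_i}+(\text{the }\tilde S\text{-term})$. The condition $\psi_i\nmid f^1_{\tilde S}$ makes this second-order equation Eisenstein-like of degree $a_i$ over the residue root. That pins down $v(y-\zeta)=1/a_i$ exactly, and with it the required root separations.

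\textbf{Step 4 (structure, parts 3 and 4).} Using $\mathfrak p\nmid ea$, each $K_{\mathfrak p}(\theta_i)$ is tamely ramified with ramification index $ea_i$ and residue degree $d_i$. The splitting field is therefore the compositum of tame extensions, which is tame with ramification index $\mathrm{lcm}(ea_i)$. Its inertia group is cyclic and its Galois group is metacyclic, $\langle\sigma,\tau\rangle$ with $\sigma^{-1}\tau\sigma=\tau^{N\mathfrak p}$, by the standard structure of tame extensions. The generator $\tau$ acts on the $d_i$ blocks of size $ea_i$ inside each orbit; the orbits of $G_{\mathfrak P}$ are exactly the root sets of the $f_i$, of sizes $d_iea_i$. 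Finally, the order of $G_{\mathfrak P}/I_{\mathfrak P}$ is the residue degree. It is divisible by each $d_i$ and by $\omega$, since the maximal unramified subextension must contain the $ea$-th roots of unity appearing in the explicit radicals $\sqrt[ea]{\cdot}$ of the tame fields. It divides $\mu ea$, following the Kölle--Schmid argument from \cref{thm:theorem of Kölle Schmid} with $e$ replaced by $ea$.
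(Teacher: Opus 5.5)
Your architecture matches the paper's. Step 2 is \cref{prop:claim 2}. The plan of matching roots of $f$ and $\tilde f$ by Krasner's lemma, starting from $v_K(\tilde f(\theta))=v_K((f-\tilde f)(\theta))\ge H+2/e$, and then reading (iii)--(iv) off the tame structure, is what the paper does. The gap is Step 3, which carries (i) and (ii) and which you leave as a plan. After the unramified base change that makes $\psi$ linear (so the relevant factor has $ea$ roots and $H=ah$), Krasner's lemma needs two quantitative facts that you do not supply.

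\begin{itemize}
\item A count. Under the relation $v_K(\beta-\beta')>h/e$, the roots split into $e$ classes of exactly $a$ elements, and roots in different classes are at distance exactly $h/e$. This is \cref{lem:classification of roots}; it uses $\mathfrak p\nmid e$ and Galois transitivity over $\hat T(\zeta_e)$ via Kummer theory. Only with this count does $\sum_\beta v_K(\theta-\beta)\ge H+2/e$ mean that an excess of at least $2/e$ over $h/e$ is shared by at most $a$ roots. That is what produces a $\beta$ with $v_K(\theta-\beta)>h/e+1/(ea)$; without it the excess could be spread over all $ea$ roots.
\item The matching upper bound $v_K(\beta-\beta')\le h/e+1/(ea)$ for distinct roots of $\tilde f$. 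The paper gets it from $v_K(\tilde f'(\beta))=v_K((\tilde f^0)'(\beta))=ah-h/e+(a-1)/(ea)$, combined with the class count. This uses that $\tilde\psi(\pi^{-h}\beta^e)$ is a uniformizer and that $\mathfrak p\nmid ea$. The same bound gives the uniqueness of $\beta$, and hence the bijection you assert in (i)--(ii).
\end{itemize}

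Your rescaling $y=\beta^e/\pi^h$ does not deliver either fact as stated. It determines $v_K(\tilde\psi(y))$ for each root separately; with $v_K(\pi)=1$ this value is $1/(ea_i)$, not $1/a_i$. That yields only lower bounds on $v_K(\beta-\beta')$, never the upper bound Krasner's lemma requires. Your Step 3 also never uses $p\nmid a$, yet that hypothesis must enter exactly there, since it is what pins down $v_K(\tilde f'(\beta))$ and hence the root separation.

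Your idea can be completed along a genuinely different line. The element $\varpi=\tilde\psi(\pi^{-h}\beta^e)$ generates the tame, totally ramified extension $\hat T(\beta)/\hat T$, so the tame different gives $v_K(\varpi-\sigma\varpi)=1/(ea)$ for every conjugate $\sigma\varpi\ne\varpi$. Pulling this back through $\beta^e-\beta'^e=\prod_k(\beta-\zeta_e^k\beta')$ gives the separation bound. The class count is still needed for the lower bound, though.

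Step 4 agrees with the paper in substance. For the divisibility $|G_{\mathfrak P}/I_{\mathfrak P}|\mid\mu ea$, the paper does not defer to K\"olle--Schmid but argues directly. Take $T\subseteq\hat T$ with $[T:K_{\mathfrak p}]=\mu$; over $T$, $L$ is a compositum of Kummer extensions of degrees $ea_j$, so $[\hat T:T]\mid ea$.
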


  To prove this theorem, we prove several propositions on polynomials having a one-sided $\varphi$-polygon. 
  Throughout this section, we assume that $i_K(f) = i_{\varphi}(f)$. 
  Since all points of $\varphi$-polygons of $f(X)$ and $\tilde{f}(X)$ lying on $S$ coincide, 
  we have $f_S(Y) = \tilde{f}_S(Y)$. 

  \begin{proposition}\label{prop:claim 2}
    $\tilde{f}(X)$ is separable. 
  \end{proposition}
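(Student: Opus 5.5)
The plan is to show that $\tilde{f}$ again satisfies the index condition $i_K(\tilde{f}) = i_{\varphi}(\tilde{f})$. Then \cref{thm:theorem of Montes Nart}, together with \cref{lem:valuation of resultant}, will decompose $\tilde{f}$ into pairwise distinct irreducible factors. Since $K_{\mathfrak{p}}$ has characteristic $0$, each irreducible factor is separable, so $\tilde{f}$ is separable.

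\emph{Step 1: the polygon and the associated polynomial of $\tilde{f}$ on $S$.} By construction $\tilde{f} = f^0 + \tilde{f}^1$. Here $f^0 = \pi^H \prod_i \Psi_i(X, X^e/\pi^h)^{a_i}$ has one-sided $\varphi$-polygon $S$ with $f^0_S = \prod_i \psi_i^{a_i} = f_S$. The term $\tilde{f}^1 = (f^1)^0$ is built from the associated polynomial of $f^1$ on $\tilde{S}$, so all its points lie on or above $\tilde{S}$. Hence $\tilde{f}$ has one-sided polygon $S$ and $\tilde{f}_S = f_S$, as the paper already notes before the statement. Also $\tilde{f}$ is monic of degree $n$ with $\overline{\tilde{f}} = X^n$.

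\emph{Step 2: the ``$f^1$-part'' of $\tilde{f}$.} Since $\tilde{f}_S = f_S$ and we may use the same lifts $\Psi_i$, we get $\tilde{f}^0 = f^0$. Therefore $(\tilde{f})^1 = \tilde{f} - f^0 = \tilde{f}^1 = (f^1)^0$. I would then check that the associated polynomial of $(f^1)^0$ on $\tilde{S}$ coincides with $f^1_{\tilde{S}}$. The construction $g \mapsto g^0$ is designed to reproduce exactly the lattice points on the relevant segment: $\pi^{H'}\prod \Psi'(X, X^e/\pi^h)$, expanded in powers of $X$, has coefficients on $\tilde{S}$ whose residues are the coefficients of the chosen factorization of $f^1_{\tilde{S}}$. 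All other contributions lie strictly above $\tilde{S}$, because the coefficients of $\Psi'$ have degree $<1$ in $X$, that is, they are constants. The degenerate case $f^1_{\tilde{S}} = 0$ (no points of $f^1$ on $\tilde{S}$) is handled by setting $\tilde{f}^1 = 0$, which again gives associated polynomial $0$.

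\emph{Step 3: applying the criterion.} By \cref{thm:equivalent condition of Montes Nart} applied to $f$, for each $i$ we have either $a_i = 1$ or $\psi_i \nmid f^1_{\tilde{S}}$. The same criterion for $\tilde{f}$ depends only on $\tilde{f}_S = f_S$ and on $(\tilde{f})^1_{\tilde{S}} = f^1_{\tilde{S}}$. It is therefore satisfied, and $i_K(\tilde{f}) = i_{\varphi}(\tilde{f})$.

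\emph{Step 4: distinct irreducible factors.} By \cref{thm:theorem of Ore}, $\tilde{f} = \tilde{f}_1 \cdots \tilde{f}_r$ with one-sided factors satisfying $(\tilde{f}_i)_{S_i} = \psi_i^{a_i}$. By \cref{thm:theorem of Montes Nart} each $\tilde{f}_i$ is irreducible. For $i \ne j$, the polynomials $\psi_i^{a_i}$ and $\psi_j^{a_j}$ are coprime, so their resultant is nonzero. \cref{lem:valuation of resultant} then gives $v_K(\mathrm{Res}(\tilde{f}_i, \tilde{f}_j)) = \min\{n_i H_j, n_j H_i\} < \infty$, and in particular $\tilde{f}_i \ne \tilde{f}_j$. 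Each $\tilde{f}_i$ is irreducible over a field of characteristic $0$, hence separable, and distinct irreducibles share no roots. Hence $\tilde{f}$ is separable.

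I expect the main obstacle to be Step 2: verifying carefully that $\tilde{f}^0 = f^0$ (same lifts) and that $((f^1)^0)_{\tilde{S}} = f^1_{\tilde{S}}$ exactly rather than up to normalization. If only equality up to a unit is available, that is harmless, since the divisibility conditions in \cref{thm:equivalent condition of Montes Nart} are insensitive to scaling. I would also double-check that the points of $(f^1)^0$ off $\tilde{S}$ do not land on $\tilde{S}$ and that the endpoint conventions for $\tilde{S}$ agree.
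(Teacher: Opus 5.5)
Your proposal is correct and follows the paper's argument. The paper likewise notes $\tilde f_S = f_S$ and $\tilde f^1_{\tilde S} = f^1_{\tilde S}$, transfers the index equality from $f$ to $\tilde f$ (it cites the index theorem, which in this one-sided situation is exactly the Criterion~1 you use), and concludes from the theorem of Montes--Nart that $\tilde f$ is a product of distinct irreducible factors, where you only make the distinctness step explicit via the resultant lemma. One minor correction to Step~2: for $e>1$ the lattice points of $\tilde S$ have abscissae $x \equiv h^{-1} \pmod e$ rather than multiples of $e$, so $(f^1)^0$ needs a factor $X^{x_0}$ (and a suitable power of $\pi$) in front of $\prod \Psi'(X^e/\pi^h)$; this does not affect the argument.
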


  \begin{proof}
    Note that $f_S(Y) = \tilde{f}_S(Y)$ and ${f^1}_{\tilde{S}}(Y) = {\tilde{f}^1}_{\tilde{S}}(Y)$. 
    By \cref{thm:theorem of index}, 
    \[
      i_K(f) = i_{\varphi}(f) \Leftrightarrow i_K(\tilde{f}) = i_{\varphi}(\tilde{f}). 
    \]
    Therefore, by \cref{thm:theorem of Montes Nart}, $\tilde{f}(X)$ factors into distinct irreducible polynomials. 
  \end{proof}

  Let $F_f$ and $F_{\tilde{f}}$ be the sets of irreducible factors of $f$ and $\tilde{f}$, respectively. 
  The following proposition follows from \cref{thm:theorem of Montes Nart}. 

  \begin{proposition}\label{prop:claim 3}
    There exists a one-to-one correspondence $\phi \leftrightarrow \psi$ between $F_f$ and $F_{\tilde{f}}$ satisfying the following conditions: 
    \begin{enumerate}
      \item There exists an irreducible polynomial $g(Y) \in \field{F}_K[Y]$ such that $\phi_{S_1}(Y) = \psi_{S_2}(Y) = g(Y)^a$, where $S_1$ and $S_2$ are the $\varphi$-polygons of $\phi$ and $\psi$, respectively. 
      \item Let $\theta$ and $\beta$ be roots of $\phi$ and $\psi$, respectively. Then, 
      \begin{align}
        e(K(\theta)/K) &= e(K(\beta)/K) = ea, \\
        f(K(\theta)/K) &= f(K(\beta)/K) = \deg(g). 
      \end{align}
    \end{enumerate}
  \end{proposition}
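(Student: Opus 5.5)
We apply \cref{thm:theorem of Ore} (its first, unconditional part) to both $f$ and $\tilde{f}$, using that they have the same associated polynomial.

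\emph{Step 1: matching the factors.} Since all points of the $\varphi$-polygons of $f$ and $\tilde{f}$ lying on $S$ coincide, we have $f_S = \tilde{f}_S$. Write its factorization over $\F_K$ as $\psi_1^{a_1}\cdots\psi_r^{a_r}$. By the first part of \cref{thm:theorem of Ore}:
\begin{itemize}
\item $f = f_1\cdots f_r$, where each $f_i$ has a one-sided $\varphi$-polygon $S_i$ of the same slope as $S$ and $(f_i)_{S_i} = \psi_i^{a_i}$;
\item $\tilde{f} = \tilde{f}_1\cdots\tilde{f}_r$, where each $\tilde{f}_i$ has a one-sided $\varphi$-polygon $\tilde{S}_i$ of that slope and $(\tilde{f}_i)_{\tilde{S}_i} = \psi_i^{a_i}$.
\end{itemize}

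\emph{Step 2: irreducibility and the bijection.} The hypothesis is $i_K(f)=i_\varphi(f)$, and the proof of \cref{prop:claim 2} shows this is equivalent to $i_K(\tilde f)=i_\varphi(\tilde f)$. So \cref{thm:theorem of Montes Nart} applies to both polynomials. It says every $f_i$ and every $\tilde{f}_i$ is irreducible. Hence $F_f=\{f_i\}$ and $F_{\tilde f}=\{\tilde f_i\}$. Since the $\psi_i$ are pairwise distinct, the rule $f_i\leftrightarrow\tilde f_i$ is a well-defined bijection. Condition 1 then holds with $g=\psi_i$ and exponent $a_i$; I read the statement's ``$a$'' as the exponent attached to that particular factor.

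\emph{Step 3: ramification and residue degrees.} \cref{thm:theorem of Montes Nart} also gives, for a root $\theta$ of $f_i$ and a root $\beta$ of $\tilde f_i$,
\[
e(K(\theta)/K)=e(K(\beta)/K)=ea_i, \qquad f(K(\theta)/K)=f(K(\beta)/K)=l\cdot\deg\psi_i .
\]
Here $l=\deg\varphi=1$ because $\varphi=X$, so the residue degree is $\deg g$. This is condition 2.

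\emph{Main obstacle.} The delicate point is the last assertion of \cref{thm:theorem of Montes Nart}. It is stated for $f$ itself, and we must check it is applied to each irreducible factor with the right exponent $a_i$. I would handle this by viewing each $f_i$ (resp. $\tilde f_i$) as a one-sided irreducible polynomial whose associated polynomial is $\psi_i^{a_i}$. The index equality for the product implies the index equality for each factor: $i_K$ adds the nonnegative resultant terms, and $i_\varphi$ is additive up to the same cross terms by \cref{lem:valuation of resultant}. Then \cref{prop:prop of Montes Nart} applied to that factor yields $e=ea_i$ and $f=\deg\psi_i$ directly.
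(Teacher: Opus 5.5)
Your proposal is correct and follows the paper's route: the paper derives this directly from \cref{thm:theorem of Montes Nart}, applied to $f$ and to $\tilde f$. For $\tilde f$ it uses $f_S=\tilde f_S$ and the index equivalence in the proof of \cref{prop:claim 2}, and it matches factors through the common $\psi_i$, exactly as you do. Your reading of $a$ as the factor's own exponent $a_i$ is the right one, and your ``main obstacle'' is not really an obstacle, because \cref{thm:theorem of Montes Nart} (in the notation of \cref{thm:theorem of Ore}) already gives $e(K(\theta)/K)=ea_i$ and $f(K(\theta)/K)=l\deg\psi_i$ for a root $\theta$ of each individual factor $f_i$.
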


  \begin{proposition}\label{prop:claim 4}
    Suppose that $f_S = \psi^a$ for some irreducible polynomial $\psi \in \field{F}_K[Y]$, and 
    that $\mathfrak{p} \nmid ea$. 
    Then there exists a one-to-one correspondence $\theta \leftrightarrow \beta$ between the roots of $f$ and those of $\tilde{f}$ such that $K(\theta) = K(\beta)$. 
  \end{proposition}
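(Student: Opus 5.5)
The plan is to show first that $f$ and $\tilde{f}$ are both irreducible and define tamely ramified extensions with the same invariants. Then I would compare the finer invariant that classifies tame extensions, namely the class of a uniformizer's $ea$-th power modulo $ea$-th powers of units, and show it is computed from data that $f$ and $\tilde{f}$ share.

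\textbf{Step 1 (irreducibility and invariants).} Since $f_S=\psi^a$ has a single irreducible factor and $i_K(f)=i_\varphi(f)$, \cref{thm:theorem of Montes Nart} (with $t=1$) shows that $f$ is irreducible. The proof of \cref{prop:claim 2} gives $i_K(\tilde{f})=i_\varphi(\tilde{f})$, and $\tilde{f}_S=f_S=\psi^a$, so $\tilde{f}$ is irreducible as well.

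By \cref{prop:claim 3}, for roots $\theta$ of $f$ and $\beta$ of $\tilde{f}$ the fields $K(\theta)$ and $K(\beta)$ both have ramification index $ea$ and residue degree $\deg\psi$. Since $\mathfrak{p}\nmid ea$, both are tamely ramified. Each therefore contains the unramified extension $T'$ of $K$ of degree $\deg\psi$, and has the form $T'(\varpi)$ with $\varpi^{ea}=u\pi$ for some unit $u\in\mathcal{O}_{T'}$. Such a field is determined up to $K$-isomorphism by the class of $\overline{u}$ in $\F_{T'}^\times/(\F_{T'}^\times)^{ea}$, together with the Frobenius action.

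\textbf{Step 2 (reduction to an isomorphism).} If $K(\theta)\cong K(\beta)$ over $K$, then some conjugate $\beta'$ of $\beta$ satisfies $K(\theta)=K(\beta')$. For each $K$-embedding $\sigma$ we then pair $\sigma(\theta)$ with $\sigma(\beta')$. Both polynomials are irreducible of the same degree $n=de$ (with $l=1$, $d=a\deg\psi$), so this gives the required bijection between roots. It therefore suffices to exhibit a uniformizer of $K(\theta)$ whose $ea$-th power divided by $\pi$ has a residue expressed in terms of $\psi$ and $f^1_{\tilde{S}}$ only, and to note that $\tilde{f}$ shares this data.

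\textbf{Step 3 (the key computation, main obstacle).} Put $y=\theta^e/\pi^h$. Its residue is a root $\zeta'$ of $\psi$, which determines the embedding of $\F_{T'}$. Expand $0=f(\theta)=f^0(\theta)+f^1(\theta)$ with $f^0(\theta)=\pi^H\Psi(\theta,y)^a$. The $\varphi$-polygon of $f^1$ lies on or above $\tilde{S}$, so $f^1(\theta)=\pi^{H+1/e}\,(c+\text{higher order})$, where $c$ has residue $f^1_{\tilde{S}}(\zeta')$ up to an explicit power of $\theta^e/\pi^h$. This residue is nonzero by \cref{thm:equivalent condition of Montes Nart}, since $\psi\nmid f^1_{\tilde{S}}$ when $a>1$. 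Hence
\[
  \Psi(\theta,y)^a=-\pi^{1/e}\,c\,(1+\cdots),
\]
so $\Psi(\theta,y)$ has valuation $1/(ea)$, and $\Pi:=\Psi(\theta,y)^b\theta^{b'}\pi^{c'}$ with $b,b',c'$ chosen via B\'ezout is a uniformizer. Its $ea$-th power divided by $\pi$ has a residue given by an explicit monomial expression in $\zeta'$ and $f^1_{\tilde{S}}(\zeta')$. The delicate part is the bookkeeping of which terms of $f^1$ land exactly on $\tilde{S}$, and the interaction of the exponents when $\gcd(e,h)$ arithmetic enters through $b,b'$.

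\textbf{Step 4 (conclusion).} Once this is done, the same computation for $\beta$ uses only $f^0$ and $f^1_{\tilde{S}}=\tilde{f}^1_{\tilde{S}}$, and $\tilde{f}-f^0=\tilde{f}^1$ again lies on or above $\tilde{S}$. It therefore yields the same residue class for the same choice of $\zeta'$. So $K(\theta)\cong K(\beta)$, and Step 2 gives the bijection. In the case $a=1$, the term $f^1$ plays no role and the argument is the classical one of K\"olle--Schmid.
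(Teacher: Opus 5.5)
Your proof is correct, but it takes a genuinely different route from the paper's. The paper argues as follows:
\begin{enumerate}
\item It reduces to $\deg\psi=1$ by an unramified base change.
\item It shows $v_K(\tilde f(\theta))\ge H+2/e$, because $f-\tilde f=f^1-\tilde f^1$ lies on or above $\tilde S$ raised by a further $1/e$.
\item It uses \cref{lem:classification of roots} to find a root $\beta$ of $\tilde f$ with $v_K(\theta-\beta)>h/e+1/(ea)$.
\item It bounds $v_K(\beta-\beta_i)\le h/e+1/(ea)$ for the other roots of $\tilde f$ through $v_K(\tilde f'(\beta))$.
\item It concludes with Krasner's lemma.
\end{enumerate}
You replace all of these root-distance estimates by the classification of tame extensions via the residue of $\Pi^{ea}/\pi$ modulo $ea$-th powers and the Frobenius action.

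The bookkeeping you flag is easier than you expect. Let $(i_0,j_0)$ be the leftmost lattice point of $\tilde S$. The correct normalization inside $K(\theta)$ is $f^1(\theta)=\pi^{j_0}\theta^{i_0}C$ with $\overline{C}=f^1_{\tilde S}(\zeta')$, not $\pi^{H+1/e}$. Consequently, for $a>1$ the element $\Pi=\Psi(y)$ is itself a uniformizer, and no B\'ezout step is needed. Since $e(j_0-H)+hi_0=1$, one gets exactly $\Pi^{ea}/\pi=(-1)^e y^{i_0}C^e$, whose residue is $(-1)^e\zeta'^{i_0}f^1_{\tilde S}(\zeta')^e$. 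For $a=1$, the uniformizer $\varpi=\theta^b/\pi^c$ with $bh-ce=1$ gives $\varpi^e/\pi=y^b$. You also need to replace $\beta$ by a conjugate with $\overline{\beta^e/\pi^h}=\zeta'$. This is possible because the absolute Galois group of $K$ surjects onto that of $\F_K$, which permutes the roots of $\psi$ transitively.

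Your route has several advantages.
\begin{itemize}
\item It avoids the reduction to $\deg\psi=1$, \cref{lem:classification of roots}, and the derivative estimate.
\item It exhibits an explicit invariant showing that $K(\theta)$ depends only on $\psi$, $f^1_{\tilde S}$, $e$ and $h$.
\item It produces a bijection that is Galois-equivariant by construction, which is exactly what part (ii) of the main theorem uses.
\end{itemize}
The paper's route, in exchange, gives a canonical metric pairing: each $\theta$ has a unique root $\beta$ of $\tilde f$ with $v_K(\theta-\beta)>h/e+1/(ea)$. This is in the spirit of K\"{o}lle--Schmid, and it does not rely on the classification of tamely ramified extensions as a black box.
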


  To prove this proposition, we prove the following lemma. 

  \begin{lemma}\label{lem:classification of roots}
    Let $\{\alpha_1, \dots ,\alpha_t\}$ be the set of roots of $f_S(Y)$ in $\overline{\field{F}}_K$ 
    with multiplicities $a_1, \dots ,a_t$, respectively. Assume that $\mathfrak{p} \nmid e$. 
    Let $Z_f$ be the set of roots of $f$ in $\overline{K}$. 
    We define an equivalence relation on $Z_f$ by $v_K(\beta - \beta') > \frac{h}{e} \quad (\beta, \beta' \in Z_f)$ and 
    denote the decomposition of $Z_f$ into equivalence classes by $Z_f = \coprod_{i = 1}^l Z_f^i$. 
    Then $l = et$, and $\#Z_f^i = a_i$. 
    In particular, if $f_S = \psi^a$ for some irreducible polynomial $\psi \in \field{F}_K[Y]$, 
    we have $\#Z_f^i = a$ for all $i$. 
  \end{lemma}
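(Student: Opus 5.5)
The idea is to rescale the variable so that the one-sided polygon becomes horizontal. After that, the equivalence classes of the lemma become the fibres of the reduction map on roots, and these can be counted with Hensel's lemma.

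Throughout, $\varphi(X)=X$ and $S$ is the unique side, joining $(0,H)$ to $(n,0)$ with $n=de$ and $H=dh$. Since $\overline{f}=X^n$, the $X$-expansion of $f$ is $f=\sum_i a_iX^i$ with $a_i$ the ordinary coefficients and $a_n=1$. By the slope of $S$ (the lower endpoint $(n,0)$ and slope $-h/e$), every root $\theta\in Z_f$ satisfies $v_K(\theta)=h/e$, so $v_K(\beta-\beta')\ge h/e$ for all $\beta,\beta'\in Z_f$.

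\textbf{Step 1: rescaling.} Work in a finite extension $K'$ of $K$ containing an element $c$ with $c^e=\pi^h$. Then $v_K(c)=h/e$, and $v_{K'}$ is a positive multiple of $v_K$. Put
\[
g(Z)=\pi^{-H}f(cZ)=\sum_i a_i c^i\pi^{-H}Z^i .
\]
Each coefficient has $v_K(a_ic^i\pi^{-H})=v_K(a_i)+ih/e-H\ge 0$, because all points $(i,v_K(a_i))$ lie on or above $S$. Equality holds exactly when $(i,v_K(a_i))\in S$, which forces $i=je$ for some $j$. For $i=je$ we have $a_{je}c^{je}\pi^{-H}=\pi^{-H+jh}a_{je}=b_j$. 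Hence $g\in\mathcal{O}_{K'}[Z]$ and
\[
\overline{g}(Z)=\sum_{j=0}^d \overline{b_j}\,Z^{je}=f_S(Z^e).
\]
This has degree $n$ and nonzero constant term $\overline{b_0}$. The roots of $g$ are exactly the $\theta/c$ with $\theta\in Z_f$, and they are units.

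\textbf{Step 2: roots of $\overline{g}$.} Since $p\nmid e$ and each $\alpha_i\neq 0$, the polynomial $Z^e-\alpha_i$ is separable. Polynomials $Z^e-\alpha_i$ for distinct $\alpha_i$ have no common root. Therefore $f_S(Z^e)=\mathrm{const}\cdot\prod_i (Z^e-\alpha_i)^{a_i}$ has exactly $et$ distinct roots in $\overline{\F}_K$. Precisely $e$ of them lie over each $\alpha_i$, and each of those has multiplicity $a_i$.

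\textbf{Step 3: Hensel and the classes.} Since $\overline{g}$ has degree $n=\deg g$, Hensel's lemma over $\mathcal{O}_{K'}$ gives a factorization of $g$ according to the coprime factorization of $\overline{g}$ into powers of $(Z-\rho)$. So for each root $\rho$ of $\overline{g}$, exactly $\mathrm{mult}(\rho)$ roots of $g$ (with multiplicity, all distinct as $f$ is separable) reduce to $\rho$.

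Finally, for $\beta,\beta'\in Z_f$ we have $v_K(\beta-\beta')>h/e$ if and only if $v_K(\beta/c-\beta'/c)>0$, i.e. if and only if $\beta/c$ and $\beta'/c$ have the same residue. Thus the relation is indeed an equivalence relation, and its classes are the fibres of the reduction map $\beta\mapsto\overline{\beta/c}$. By Step 2 there are $l=et$ classes, and a class over a root of $Z^e-\alpha_i$ has $a_i$ elements. The case $f_S=\psi^a$ follows immediately.

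The main point requiring care is Step 1, namely the identification $\overline{g}=f_S(Z^e)$. One must use that only lattice points with abscissa divisible by $e$ lie on $S$, and that $\zeta$ plays no role when $\varphi=X$. The hypothesis $p\nmid e$ enters only in Step 2, to ensure that $Z^e-\alpha_i$ is separable.
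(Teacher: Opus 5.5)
Your proof is correct, and it follows a genuinely different route from the paper's.

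\textbf{The paper's route.} The paper first passes to an unramified extension $\hat{T}$ and uses Ore's factorization to reduce to the case where $f_S$ is a power of a linear polynomial. It shows separately, via the Montes--Nart resultant lemma and a Galois-conjugation argument, that roots of different Ore factors are never equivalent. In the linear case it compares the roots of $f$ with the roots $\alpha\zeta^i$ of $f^0=(X^e-b)^a$, and uses $p\nmid e$ to get $v_K(\alpha\zeta^i-\alpha\zeta^j)=h/e$. Finally it invokes Kummer theory, through transitivity of a Galois group on the $\alpha\zeta^i$, to show that all $e$ classes have equal size, hence size $a$.

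\textbf{Your route.} You rescale by $c$ with $c^e=\pi^h$. This makes $g(Z)=\pi^{-H}f(cZ)$ monic and integral with $\overline{g}(Z)=f_S(Z^e)$. The classes then become the fibres of $\theta\mapsto\overline{\theta/c}$, and the count reads off directly from the factorization of $f_S(Z^e)$.

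\textbf{Comparison.} Your argument is more elementary and more uniform. It needs no reduction step, no resultant estimates and no Kummer theory, and it treats an arbitrary $f_S$ in one stroke. It also makes precise the loosely indexed claim $\#Z_f^i=a_i$: exactly $e$ classes lie over each $\alpha_i$, and each has $a_i$ elements. The paper's route has only the mild advantage of describing the classes through the roots of $f^0$, which is the language of the proof of \cref{prop:claim 4}. Even there, only the class sizes are actually used.

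\textbf{One small point.} As written, Hensel's lemma over $\mathcal{O}_{K'}$ needs the residue field of $K'$ to contain the roots of $\overline{g}$, so $K'$ must be enlarged accordingly. It is simpler to skip Hensel altogether. Since $g(Z)=\prod_{\theta\in Z_f}(Z-\theta/c)$ with every $\theta/c$ integral, reduction gives $\overline{g}(Z)=\prod_{\theta}(Z-\overline{\theta/c})$ in $\overline{\F}_K[Z]$. Unique factorization there then yields the fibre sizes immediately.
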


  \begin{proof}
    We first show that it is sufficient to consider the case that $f_S$ is a power of a polynomial of degree one. 
    Let $\hat{T}$ be the unique unramified extension of $K$ of degree $\deg f_S$, then 
    $f_S = \prod_{k = 1}^t {\psi_k}^{a_k}$ for some distinct linear polynomials $\psi_k \in \field{F}_{\hat{T}}[Y] (k = 1, \dots ,t)$. 
    By \cref{thm:theorem of Ore}, $f$ admits a factorization $f = \prod_{k = 1}^t f_k$ over $\hat{T}$ such that $(f_k)_{S_{k}} = {\psi_k}^{a_k}$, where $S_{k}$ is the $\varphi$-polygon of $f_k$. 
    Denote the decomposition of $Z_{f_k}$ into equivalence classes by $Z_{f_k} = \coprod_{i = 1}^{l_k} Z_{f_k}^i$. 
    We will show that $Z_{f_k}^i$ forms an equivalence class in $Z_f$ for all $k$ and $i$. 
    It suffices to show that the elements of $Z_{f_k}$ and $Z_{f_{k'}}$ are not equivalent for $k \ne k'$. 
    Let $\beta \in Z_{f_k}$ and $\beta' \in Z_{f_{k'}}$, and  
    let $g_k$ and $g_{k'}$ be the minimal polynomials of $\beta$ and $\beta'$, respectively. 
    By \cref{thm:theorem of product}, the $\varphi$-polygons of $g_k$ and $g_{k'}$ have the same slope as $S$, 
    and $(g_k)_{S_{k}'} = \psi_k^{b_k}$ and $(g_{k'})_{S_{k'}'} = \psi_{k'}^{b_{k'}}$ for some $b_k$ and $b_{k'}$, where $S_{k}'$ and $S_{k'}'$ are the $\varphi$-polygons of $g_k$ and $g_{k'}$, respectively. 
    Let $\alpha_k$ and $\alpha_{k'}$ be roots of $g_k^0$ and $g_{k'}^0$, respectively. 
    Then, $\overline{\pi^{-h}{\alpha_k}^e}, \overline{\pi^{-h}{\alpha_{k'}}^e} \in \field{F}_{\hat{T}}$ are roots of $\psi_k$ and $\psi_{k'}$, respectively. 
    Since $\psi_k$ and $\psi_{k'}$ are coprime, we have $\overline{\pi^{-h}{\alpha_k}^e} \ne \overline{\pi^{-h}{\alpha_{k'}}^e}$ in $\field{F}_{\hat{T}}$, and 
    \[
      v_K(\pi^{-h}{\alpha_k}^e - \pi^{-h}{\alpha_{k'}}^e) = 0. 
    \]
    This implies that 
    \[
      v_K(\pi^{-\frac{h}{e}}\alpha_k - \pi^{-\frac{h}{e}}\alpha_{k'}) = 0, 
    \]
    and we have 
    \[
      v_K(\alpha_k - \alpha_{k'}) = \frac{h}{e}. 
    \]
    By \cref{lem:valuation of resultant}, 
    \[
      v_K(\mathrm{Res}(g_k,g_k^0)) > b_k e \cdot b_k h. 
    \]
    Let $L_0$ be the compositum of the splitting fields of $g_k$ and $g_k^0$ over $\hat{T}$. 
    Since the Galois group $\Gal(L_0/\hat{T})$ acts transitively on $Z_{g_k}$,
    for any two roots $\beta_0, \beta_1 \in Z_{g_k}$, there exists $\sigma_0 \in \Gal(L_0/\hat{T})$ such that 
    $\sigma_0(\beta_0) = \beta_1$, and we have $v_K(\prod_{\alpha \in Z_{g_k^0}} (\beta_0 - \alpha)) = v_K(\sigma_0(\prod_{\alpha \in Z_{g_k^0}} (\beta_0 - \alpha))) = v_K(\prod_{\alpha \in Z_{g_k^0}} (\beta_1 - \alpha))$. 
    Hence, 
    \[
      v_K(\prod_{\alpha \in Z_{g_k^0}} (\beta - \alpha)) > b_k h. 
    \]
    Moreover, by \cref{thm:theorem of the polygon}, for any $\alpha \in Z_{g_k^0}$, we have 
    \begin{align}
      v_K(\beta - \alpha) &\ge \min\{v_K(\beta),v_K(\alpha)\} \\
      &=\frac{h}{e}. 
    \end{align}
    These inequalities show that there exists a root $\alpha_k$ of $g_k^0$ such that 
    \[
      v_K(\beta - \alpha_k) > \frac{h}{e}. 
    \]
    On the other hand, since
    \[
      \frac{h}{e} = v_K(\alpha_k - \alpha_{k'}) = v_K(\alpha_k - \beta + \beta - \alpha_{k'}), 
    \]
    we have 
    \[
      v_K(\beta - \alpha_{k'}) = \frac{h}{e}. 
    \]
    Similarly, the same holds for $\beta'$. Therefore, we have 
    \[
      v_K(\beta - \beta') = v_K(\beta - \alpha_k + \alpha_k - \beta') = \frac{h}{e}. 
    \]

    Therefore, we may assume that $f_S = \psi^a$ for some linear polynomial $\psi$. 
    In this case, $f^0(X) = (X^e - b)^a \quad (b \in \mathcal{O}_K)$. 
    Let $\alpha$ be a root of $f^0$, and $\zeta$ a primitive $e$-th root of unity. 
    Then, the set of roots of $f^0$ is $Z_{f^0} = \{\alpha,\alpha\zeta, \dots ,\alpha\zeta^{e-1}\}$. 
    By \cref{lem:valuation of resultant}, for $\beta \in Z_f$, there exists an $i \in \{0, \dots ,e-1\}$ such that 
    \[
      v_K(\beta - \alpha\zeta^i) > \frac{h}{e}. 
    \]
    Moreover, since $\mathfrak{p} \nmid e$, we have 
    \[
      v_K(\prod_{i \ne 0} (\alpha - \alpha\zeta^i)) = v_K(\alpha^{e - 1}\prod_{i \ne 0} (1 - \zeta^i)) = (e - 1) \cdot \frac{h}{e}. 
    \]
    For $i \ne 0$, we have 
    \[
      v_K(\alpha - \alpha\zeta^i) \ge \min\{v_K(\alpha),v_K(\alpha\zeta^i)\} = \frac{h}{e}. 
    \]
    Hence, 
    \[
      v_K(\alpha - \alpha\zeta^i) = \frac{h}{e} \label{formula:lem-1}. 
    \]

    Put $Z_f^i = \{\beta \in Z_f \mid v_K(\beta - \alpha\zeta^i) > \frac{h}{e}\} \quad (i = 0, \dots ,e - 1)$. 
    We will show that each $Z_f^i$ forms an equivalence class in $Z_f$. 
    For $\beta, \beta' \in Z_f^i$, we have 
    \begin{align}
      v_K(\beta - \beta') &= v_K(\beta - \alpha\zeta^i + \alpha\zeta^i - \beta') \\
      &= \min\{v_K(\beta - \alpha\zeta^i),v_K(\beta' - \alpha\zeta^i)\} \\
      &> \frac{h}{e}. 
    \end{align}
    Hence, $\beta$ and $\beta'$ are equivalent. 
    Let $i \ne j$, and let $\beta \in Z_f^i$ and $\beta' \in Z_f^j$. 
    Then, we have 
    \begin{align}
      v_K(\beta - \beta') &= v_K(\beta - \alpha\zeta^j + \alpha\zeta^j - \beta') \\
      &\ge \min\{v_K(\beta - \alpha\zeta^j),v_K(\beta' - \alpha\zeta^j)\}. 
    \end{align}
    By (\ref{formula:lem-1}), 
    \begin{align}
      v_K(\beta - \alpha\zeta^j) &= v_K(\beta - \alpha\zeta^i + \alpha\zeta^i - \alpha\zeta^j) \\
      &= \frac{h}{e}. 
    \end{align}
    This implies that 
    \[
      v_K(\beta - \beta') = \frac{h}{e}. 
    \]
    Therefore, $Z_f^i$ forms an equivalence class. 

    It remains to show that $\#Z^i_f = a$ for all $i$. 
    Let $T = \hat{T}(\zeta)$. Since $\mathfrak{p} \nmid e$, $T/K$ is unramified. 
    If $e = 1$, there is only one equivalence class, and the claim is clear. 
    Assume that $e \ne 1$. 
    Let $\beta_i \in Z_f^i$, and let $L$ and $L_0$ be the splitting fields of $f$ and $f^0$ over $T$, respectively. 
    Since $v_K(\alpha) = \frac{h}{e}$ and $e$ and $h$ are coprime, we have $v_K(\alpha^m) \notin \Z$ for each $m = 1, \dots ,e-1$, hence $\alpha^m \notin T$. 
    By Kummer theory, $f^0$ is irreducible over $T$, and for any $i$ and $j$, there exists $\sigma_{ij} \in \Gal(LL_0/T)$ such that $\sigma_{ij}(\alpha \zeta^i) = \alpha \zeta^j$. 
    Then, we have 
    \[
      v_K(\beta_i - \alpha\zeta^i) = v_K(\sigma_{ij}(\beta_i) - \alpha\zeta^j), 
    \]
    hence $\sigma_{ij}(\beta_i) \in Z^j_f$. This implies that $\#Z^j_f \ge \#Z^i_f$ for any $i$ and $j$, and so $\#Z^j_f = \#Z^i_f$. 
    Therefore, we have shown that $\#Z^i_f = a$. 
  \end{proof}
  
  \begin{remark}
    In the proof of this lemma, the assumption $i_K(f) = i_{\varphi}(f)$ is not required. 
  \end{remark}

  \begin{proof}[Proof of \cref{prop:claim 4}]
    Put $n = \deg \psi$. Then, the length, height, and degree of $S$ are given by 
    $E = \deg f = \deg \tilde{f} = nea, H = nah, d = na$. 

    First, we show that it is sufficient to consider the case that $n = 1$. 
    Let $\hat{T}$ be the unramified extension of $K$ of degree $n$. 
    Then, $\psi = \prod_{i = 1}^n \psi_i$ for some distinct linear polynomials $\psi_i \quad (i = 1, \dots ,n)$. 
    Let $\theta$ and $\beta$ be roots of $f$ and $\tilde{f}$, respectively. 
    By \cref{thm:theorem of Montes Nart}, $\hat{T} \subseteq K(\theta), \hat{T} \subseteq K(\beta)$. 
    Hence, we have $K(\theta) = \hat{T}(\theta), K(\beta) = \hat{T}(\beta)$. 
    By \cref{prop:prop of Montes Nart}, $\tilde{\psi}(\pi^{-h}\theta^e)$ is a uniformizer of $K(\theta)$, where $\tilde{\psi}$ is a lift of $\psi$. 
    Let $g \in \mathcal{O}_{\hat{T}}[X]$ be the minimal polynomial of $\theta$ over $\hat{T}$. 
    By \cref{thm:theorem of Ore}, $g_S = \psi_i^l$ for some $i$ and $l$. 
    Assume that $i = 1$. 
    Since $g^0(\theta) + g^1(\theta) = g(\theta) = 0$, we have 
    \[
      v_K(g^0(\theta)) = v_K(g^1(\theta)) \ge H + \frac{1}{e}. 
    \]
    Moreover, let $\tilde{\psi}_1$ be a lift of $\psi_1$, since $g^0(\theta) = \pi^H\tilde{\psi}_1(\pi^{-h}\theta^e)^l$, 
    we have 
    \[
      v_K(\tilde{\psi}_1(\pi^{-h}\theta^e)) \ge \frac{1}{le} \ge \frac{1}{ea}. 
    \]
    On the other hand, we have 
    \[
      v_K(\tilde{\psi}_1(\pi^{-h}\theta^e)) \le v_K(\tilde{\psi}(\pi^{-h}\theta^e)) \le \frac{1}{ea}. 
    \]
    It follows that all the inequalities become equalities, $l = a$, and $\tilde{\psi}_1(\pi^{-h}\theta^e)$ is a uniformizer of $\hat{T}(\theta)$. 
    By \cref{prop:prop of Montes Nart}, the equality $i_{\hat{T}}(g) = i_{\varphi}(g)$ holds. 
    Therefore, it is sufficient to prove the proposition for $\hat{T}$ and $g$. 

    Hence, we can suppose that $\deg \psi = 1$. Let $\theta$ be a root of $f$. 
    Let $f(X) - \tilde{f}(X) = \sum_{i = 0}^E c_{E - i}X^i$. 
    Since $f_S = \tilde{f}_S, f_{\tilde{S}}^1 = \tilde{f}_{\tilde{S}}^1$, 
    for each $i$, we have 
    \[
      v_K(c_{E - i}) \ge H - \frac{h}{e} \cdot i + \frac{2}{e}. 
    \]
    Thus, we have 
    \[
      v_K(\tilde{f}(\theta)) = v_K(f(\theta) - \tilde{f}(\theta)) \ge H + \frac{2}{e} > H + \frac{1}{e}. 
    \]
    By \cref{thm:theorem of Montes Nart}, the ramification index of $K(\theta)/K$ is equal to $ea$. 
    Denote the set of roots of $\tilde{f}$ by $\{\beta_j \mid j = 1, \dots ,ea\}$. 
    For all $j$, we have 
    \[
      v_K(\theta - \beta_j) \ge \min\{v_K(\theta),v_K(\beta_j)\} = \frac{h}{e}. 
    \]
    If $v_K(\theta - \beta_{j_1}) > \frac{h}{e}$ and $v_K(\beta_{j_1} - \beta_{j_2}) = \frac{h}{e}$, we have 
    \[
      v_K(\theta - \beta_{j_2}) = v_K(\theta - \beta_{j_1} + \beta_{j_1} - \beta_{j_2}) = \frac{h}{e}. 
    \]
    By \cref{lem:classification of roots}, there are at most $a$ roots such that $v_K(\theta - \beta_j) \ge \frac{h}{e} + \frac{1}{ea}$. 
    Noting that $\tilde{f}$ has $ea$ roots, we obtain from above inequalities that there exists $j_0 \in \{1, \dots E\}$ such that 
    \[
      v_K(\theta - \beta_{j_0}) > \frac{h}{e} + \frac{1}{ea} \label{formula:prop4-2}. 
    \]
    Assume that $j_0 = 1$, and denote $\beta = \beta_1$. 

    We consider the derivative $\tilde{f}'$ of $\tilde{f}$. 
    Since $\tilde{f}^0(X) = \pi^H\tilde{\psi}(\pi^{-h}X^e)^a$, we have 
    \[
      (\tilde{f}^0)'(X) = ea\pi^{(a-1)h}X^{e - 1}\tilde{\psi}(\pi^{-h}X^e)^{a - 1}. 
    \]
    Noting that $\mathfrak{p} \nmid ea$ and $\tilde{\psi}(\pi^{-h}\beta^e)$ is a uniformizer of $K(\beta)$, it follows that 
    \[
      v_K((\tilde{f}^0)'(\beta)) = ah - \frac{h}{e} +\frac{a - 1}{ea}. 
    \]
    Since the points of  the $\varphi$-polygon of $\tilde{f}^1$ lie on $\tilde{S}$, we have 
    \[
      v_K((\tilde{f}^1)'(\beta)) \ge ah - \frac{h}{e} + \frac{1}{e} > v_K((\tilde{f}^0)'(\beta)). 
    \]
    Hence, 
    \begin{align}
      \sum_{i \ne 1} v_K(\beta - \beta_i) &= v_K(\tilde{f}'(\beta)) \\ 
      &=v_K((\tilde{f}^0)'(\beta)) \\
      &= ah - \frac{h}{e} + \frac{a - 1}{ea}
    \end{align}
    Since $v_K(\beta - \beta_i) \ge \frac{h}{e}$ for $i \ne 1$, we have 
    \[
      v_K(\beta - \beta_i) \le \frac{h}{e} + \frac{a - 1}{ea}. 
    \]
    By \cref{lem:classification of roots}, there are just $a - 1$ roots $\beta_i$ satisfying $v_K(\beta - \beta_i) > \frac{h}{e}$. 
    Consequently, for all $i = 2, \dots ,ea$, 
    \[
      v_K(\beta - \beta_i) \le \frac{h}{e} + \frac{1}{ea}. \label{formula:prop4-3}
    \]

    From (\ref{formula:prop4-2}) and (\ref{formula:prop4-3}), for all $i \ne 1$, 
    we have 
    \[
      v_K(\theta - \beta) > v_K(\beta - \beta_i). 
    \]
    By Krasner's lemma, this gives $K(\beta) \subset K(\theta)$. 
    We have equality since $f$ and $\tilde{f}$ are irreducible over $K$ of the same degree. 

    It remains to show the uniqueness of $\beta$. 
    If there exists another root $\beta'$ of $\tilde{f}$ satisfying (\ref{formula:prop4-2}), then we have 
    \[
      v_K(\beta - \beta') > \frac{h}{e} + \frac{1}{ea}. 
    \]
    This contradicts (\ref{formula:prop4-3}). 
  \end{proof}

  \begin{proof}[Proof of \cref{thm:our main theorem}]
  We now prove \cref{thm:our main theorem}. 
  We use the notation of \cref{sec:Kölle Schmid}. 

  By \cref{prop:claim 2}, $\tilde{f}$ is separable. 
  By \cref{thm:theorem of the polygon}, the $\mathfrak{p}$-adic valuations of all $de$ roots of $f$ are equal to $m$. 

  Assume that $\mathfrak{p} \nmid ea$. 
  (i) follows from \cref{prop:claim 3} and \cref{prop:claim 4}. 
  By (i), the splitting fields of $f$ and $\tilde{f}$ coincide, say $L$.
  Hence, $G_{\mathfrak{P}}$ is isomorphic to $\Gal_{K_{\mathfrak{p}}}(\tilde{f})$ as a finite group. 
  Let $\theta$ be a root of $f$, and $g(X) \in K_{\mathfrak{p}}[X]$ the minimal polynomial of $\theta$ over $K_{\mathfrak{p}}$. 
  Let $\tilde{g}$ be the irreducible factor of $\tilde{f}$ corresponding to $g$ under the bijection of \cref{prop:claim 3}, and 
  $\beta$ be the root of $\tilde{g}$ corresponding to $\theta$ under the bijection of \cref{prop:claim 4}. 
  Since $\deg g = \deg \tilde{g}$, the $G_{\mathfrak{P}}$-orbit of $\theta$ has the same length as the $\Gal_{K_{\mathfrak{p}}}(\tilde{f})$-orbit of $\beta$. 
  Moreover, the stabilizers of $\theta$ and $\beta$ can be written as $\Gal(L/K_{\mathfrak{p}}(\theta))$ and $\Gal(L/K_{\mathfrak{p}}(\beta))$, respectively. 
  Since $K_{\mathfrak{p}}(\theta) = K_{\mathfrak{p}}(\beta)$, it follows that the stabilizers of $\theta$ and $\beta$ coincide. 
  These imply (ii). 

  Put $G' = \Gal(L/K_{\mathfrak{p}})$. By (ii), we may identify $G_{\mathfrak{P}}$ with $G'$, together with their action on $Z_f$ (or $Z_{\tilde{f}}$). 
  Let $\hat{T}$ be the maximal unramified subextension of $L/K_{\mathfrak{p}}$, then $I_{\mathfrak{P}} = \Gal(L/\hat{T})$. 

  By \cref{thm:theorem of Montes Nart}, the factorization of \cref{thm:theorem of Ore} 
  \[
    \tilde{f}(X) = \tilde{f}_1(X) \cdots \tilde{f}_r(X)
  \]
  is an irreducible factorization. Hence, $Z_f$ has $r$ orbits of lengths $d_1 ea_1, \dots ,d_r ea_r$ under the action of $G$. 
  Moreover, since $L/\hat{T}$ is tamely ramified, $G_{\mathfrak{P}}$ is a metacyclic group. 
  Let $\tau$ be a generator of $I_{\mathfrak{P}}$, and $\sigma$ be a lift of a generator of $G_{\mathfrak{P}}/I_{\mathfrak{P}}$. 
  Then we have $\sigma^{-1}\tau\sigma = \tau^{N\mathfrak{p}}$. 

  Let $\varepsilon \in \bar{K}_{\mathfrak{p}}$ be a primitive $ea$-th root of unity. 
  Then $[K_{\mathfrak{p}}(\varepsilon):K_{\mathfrak{p}}] = \omega$. 
  Let $\beta$ be a root of $\tilde{f}$, and $T'$ be the maximal unramified subextension of $K_{\mathfrak{p}}(\beta)/K_{\mathfrak{p}}$. 
  Since $K_{\mathfrak{p}}(\beta)/K_{\mathfrak{p}}$ is tamely ramified and by the theorem of Pauli and Roblot (\cite[Theorem 7.2]{MR1836924}), 
  there exists an element $u \in T'$ such that $K_{\mathfrak{p}}(\beta)$ coincides with the extension over $T'$ defined by the polynomial $X^{ea_i} - u$. 
  Let $\alpha$ be a root of $X^{ea_i} - u$. Then the set of roots of $X^{ea_i} - u$ can be written as $\{\alpha,\alpha\varepsilon_i,\dots,\alpha\varepsilon_i^{ea_i-1}\}$, 
  where $\varepsilon_i$ is a primitive $ea_i$-th root of unity. 
  Since the normal closure of $K_{\mathfrak{p}}(\beta)$ is contained in $L$, $\varepsilon_i \in L$ for all $i=1, \dots ,r$. 
  Hence, $\varepsilon \in L$. 
  Since $\mathfrak{p} \nmid ea$, $K_{\mathfrak{p}}(\varepsilon)/K_{\mathfrak{p}}$ is unramified. Hence, $K_{\mathfrak{p}}(\varepsilon) \subset \hat{T}$. 
  It follows that $\omega$ divides $|G_{\mathfrak{P}}/I_{\mathfrak{P}}|$. 
  Moreover, let $\beta_i$ be a root of $\tilde{f}_i$. 
  By \cref{thm:theorem of Montes Nart}, the degree over $K_{\mathfrak{p}}$ of the maximal unramified subextension of $K_{\mathfrak{p}}(\beta_i)/K_{\mathfrak{p}}$ 
  is equal to $d_i$. It follows that $d_i$ divides $|G_{\mathfrak{P}}/I_{\mathfrak{P}}|$ for each $i$. 
  Therefore $\mu$ divides $|G_{\mathfrak{P}}/I_{\mathfrak{P}}|$. 

  Since $d_i$ divides $|G_{\mathfrak{P}}/I_{\mathfrak{P}}|$ for each $i$, $\tilde{f}_S$ factors as a product of linear polynomials over $\field{F}_{\hat{T}}$. 
  Since $\hat{T}/K_{\mathfrak{p}}$ is unramified, the $\varphi$-polygon of $\tilde{f}$ over $\hat{T}$ coincides with that over $K_{\mathfrak{p}}$. 
  This implies that the equality $i_{\hat{T}}(\tilde{f}) = i_{\varphi}(\tilde{f})$ also holds. 
  Indeed, the polynomial $\tilde{f}^1$ defined over $\hat{T}$ coincides with that defined over $K_{\mathfrak{p}}$. 
  Noting that $\tilde{f}^1_{\tilde{S}}$ is a polynomial over $\field{F}_{K_{\mathfrak{p}}}$, it follows from \cref{thm:equivalent condition of Montes Nart} that 
  $i_{\hat{T}}(\tilde{f}) = i_{\varphi}(\tilde{f})$. 
  By \cref{thm:theorem of Montes Nart}, $\tilde{f}$ factors as a product of irreducible polynomials of degree $ea_i$. 
  Thus, $I_{\mathfrak{P}}$-orbits of $Z_{\tilde{f}}$ have lengths $ea_i$. 
  Since $L/K_{\mathfrak{p}}$ is tamely ramified, $I_{\mathfrak{P}}$ is a cyclic group. 
  These imply (iii). 

  There exists a unique subfield $T$ of $\hat{T}$ such that $[T:K_{\mathfrak{p}}] = \mu$. 
  Let $\beta$ be a root of $\tilde{f}$, and suppose that $\tilde{f}_j$ is the minimal polynomial of $\beta$. 
  Since $d_j \mid \mu$, $(\tilde{f}_j)_S$ factors as a product of linear polynomials over $\field{F}_T$. 
  By \cref{thm:theorem of Montes Nart}, $T(\beta)/T$ is a totally ramified extension of degree $ea_j$. 
  Since $\varepsilon \in T$, $T(\beta)/T$ is a Galois extension by the theorem of Pauli and Roblot and Kummer theory. 
  On the other hand, $\hat{T}/T$ is unramified, and hence $T(\beta)$ and $\hat{T}$ are linearly disjoint over $T$. 
  It follows that $T(\beta)\hat{T}/T(\beta)$ is a cyclic extension of degree $[\hat{T}:T]$. 
  Noting that $L$ is the compositum of all these $T(\beta)$ and $T(\beta)/T$ is a Galois extension of degree $ea_i$, 
  the exponent of its Galois group $\Gal(L/T)$ is $ea$. 
  Hence, the exponent of $\Gal(T(\beta)\hat{T}/T)$ divides $ea$. 
  Since $\Gal(T(\beta)\hat{T}/T(\beta))$ is a cyclic subgroup of $\Gal(T(\beta)\hat{T}/T)$, 
  its degree $[\hat{T}:T]$ divides $ea$. Therefore $|G_{\mathfrak{P}}/I_{\mathfrak{P}}|$ divides $\mu ea$. 
  Thus, (iv) has been proved. 

  \end{proof}

  \subsection{Many-sided case}\label{sec:many-sided}

  In this section, we study polynomials whose Newton polygons are not necessarily one-sided.
  Montes and Nart showed that,
  if the decomposition of $f(X)$ corresponding to its Newton polygon is $f_{1}(X)\cdots f_{r}(X)$,
  then one can obtain from $f(X)$ the associated polynomial of each $f_{i}$, as well as a polynomial containing information on $(f_{i}^{1})_{\widetilde{S_{i}}}$ (\cref{thm:criterion 2 of Montes Nart}).
  For their purposes, the polynomial obtained from $f(X)$ is sufficient, although it may contain extraneous terms. 
  For our purposes, however, we need to recover the exact polynomial $(f_i^1)_{\widetilde{S_{i}}}$. 
  In this section, we give a method to recover it from $f(X)$. 

  Let the sides of the Newton polygon of $f(X)$ be denoted from left to right by $S_{1},\dots,S_{r}$,
  and let the decomposition according to \cref{thm:theorem of the polygon} be denoted by
  \[
    f(X)=f_{1}(X)\cdots f_{r}(X).
  \]
  Since each Newton polygon of $f_{i}(X)$ has only one side as mentioned above in \cref{thm:criterion 2 of Montes Nart},
  we can define $f_{i}^{1}(X)$ for $i=1,\dots,r$.
  Thus we have
  \begin{align}
    f(X) &= \prod_{i=1}^{r}(f_{i}^{0}(X)+f_{i}^{1}(X)) \\
    &= \prod_{i=1}^{r}f_{i}^{0}(X) + \sum_{i=1}^{r}\left(f_{i}^{1}(X)\prod_{j\ne i}f_{j}^{0}(X)\right) + \sum_{i<j}\left(f_{i}^{1}(X)f_{j}^{1}(X)\prod_{k\ne i,j}f_{k}^{0}(X)\right)
  \end{align}
  By definition, we obtain
  \[
    f^{1}(X) = \sum_{i=1}^{r}\left(f_{i}^{1}(X)\prod_{j\ne i}f_{j}^{0}(X)\right) + \sum_{i<j}\left(f_{i}^{1}(X)f_{j}^{1}(X)\prod_{k\ne i,j}f_{k}^{0}(X)\right).
  \]

  Let $\ell\in\{1,2,\dots,r\}$.
  \begin{itemize}
    \item   If $\ell = i$ or $\ell = j$, then  the Newton polygon of $f_{i}^{1}(X)f_{j}^{1}(X)\prod_{k\ne i,j}f_{k}^{0}(X)$ does not intersect with $\widetilde{S_{\ell}}$.
    \item   If $\ell \ne i,j$, then $f_{i}^{1}(X)f_{j}^{1}(X)\prod_{k\ne i,j}f_{k}^{0}(X)$ is divisible by $f_{\ell}^{0}(X)$.
    \item   If $\ell \ne i$, then $f_{i}^{1}(X)\prod_{j\ne i}f_{j}^{0}(X)$ is divisible by $f_{\ell}^{0}(X)$.
  \end{itemize}
  Therefore we have
  \[
    (f^{1})_{\widetilde{S_{\ell}}}(Y) \equiv \left(f_{\ell}^{1}\prod_{i\ne\ell}f_{i}^0\right)_{\widetilde{S_{\ell}}}(Y) \mod (f_{\ell}^{0})_{\widetilde{S_{\ell}}}(Y).
  \]
  Since the Newton polygon of $\prod_{i\ne\ell}f_{i}^{0}(X)$ has no side with the same slope as $S_{\ell}$,
  $\left(\prod_{i\ne\ell}f_{i}^{0}\right)_{\widetilde{S_{\ell}}}(Y)$ is a monomial.
  Here, $\widetilde{S_{\ell}}$ is obtained by translating $S_{\ell}$ so that it is tangent to the Newton polygon of $\prod_{i\ne\ell}f_{i}^{0}(X)$.
  Put 
  \[
    A_{\ell} = \sum_{i<\ell}\lfloor \deg f_{i}^{0}(X)/e_{\ell}\rfloor. 
  \]
  We define $R_{\ell}(Y)$ to be a representative of the residue class of $(f^{1})_{\widetilde{S_{\ell}}}(Y)$ modulo $(f_{\ell}^{0})_{\widetilde{S_{\ell}}}(Y)$ of the form 
  \[
    R_{\ell}(Y) = Y^{A_{\ell}}Q_{\ell}(Y). 
  \]
  We choose such a representative so that 
  \[
    \deg Q_{\ell}(Y) < \deg (f_{\ell}^0)_{\widetilde{S_{\ell}}}(Y). 
  \]
  Under this condition, $R_{\ell}(Y)$ is uniquely determined. 
  
  Now we obtain the following theorem.
  \begin{theorem}\label{thm:our main theorem 2}
    With the above notation, we have 
    \[
      R_{\ell}(Y) = \left(\prod_{i\ne\ell}f_{i}^{0}\right)_{\widetilde{S_{\ell}}}(Y) \cdot (f_{\ell}^{1})_{\widetilde{S_{\ell}}}(Y).
    \]
  \end{theorem}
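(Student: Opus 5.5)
The plan is to show that the right-hand side $P_\ell(Y):=\left(\prod_{i\ne\ell}f_{i}^{0}\right)_{\widetilde{S_{\ell}}}(Y)\cdot (f_{\ell}^{1})_{\widetilde{S_{\ell}}}(Y)$ satisfies the two properties that single out $R_\ell$:
\begin{enumerate}
\item it is congruent to $(f^1)_{\widetilde{S_\ell}}(Y)$ modulo $(f_\ell^0)_{\widetilde{S_\ell}}(Y)$;
\item it has the form $Y^{A_\ell}Q(Y)$ with $\deg Q<\deg (f_\ell^0)_{\widetilde{S_\ell}}(Y)$.
\end{enumerate}
Uniqueness of such a representative then gives $R_\ell=P_\ell$.

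\textbf{Step 1 (the congruence).} This is essentially the computation preceding the statement. I will expand $f^1$ into the single and double terms and treat them separately, reading off associated polynomials on $\widetilde{S_\ell}$.
\begin{itemize}
\item Double terms $f_i^1f_j^1\prod_{k\ne i,j}f_k^0$: by \cref{thm:theorem of product} their polygon lies strictly above $\widetilde{S_\ell}$ when $\ell\in\{i,j\}$, so they contribute $0$. When $\ell\notin\{i,j\}$, the factor $f_\ell^0$ makes the contribution divisible by $(f_\ell^0)_{\widetilde{S_\ell}}$, again by the multiplicativity in \cref{thm:theorem of product}. One caveat: the associated polynomial of a product along a line that is only tangent (not a side) is the product of the associated polynomials up to a monomial factor. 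I would state this multiplicativity for arbitrary lines of the given slope explicitly.
\item Single terms with $i\ne\ell$: these are likewise divisible by $(f_\ell^0)_{\widetilde{S_\ell}}$.
\item The term $i=\ell$: multiplicativity gives exactly $(f_\ell^1)_{\widetilde{S_\ell}}\cdot(\prod_{i\ne\ell}f_i^0)_{\widetilde{S_\ell}}$.
\end{itemize}

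\textbf{Step 2 (the shape).} Since $\prod_{i\ne\ell}f_i^0$ has no side of slope equal to that of $S_\ell$, its associated polynomial on the tangent line $\widetilde{S_\ell}$ is a monomial $cY^{k}$. I must show $k=A_\ell$. The tangency point is the vertex joining the sides of slopes steeper than $S_\ell$ (from $f_i$, $i<\ell$) to those of flatter slopes. Its abscissa is $\sum_{i<\ell}\deg f_i^0$, since $\varphi=X$ and $f_i^0$ has degree $E(S_i)$. The exponent of $Y$ is the number of steps of length $e_\ell$ from the first lattice point of $\widetilde{S_\ell}$ in the quadrant to this vertex. I expect this count to equal $\lfloor\sum_{i<\ell}\deg f_i^0/e_\ell\rfloor$. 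However, the definition uses $\sum_{i<\ell}\lfloor\deg f_i^0/e_\ell\rfloor$, so I need to check these agree. Each $\deg f_i^0=E(S_i)$ is a multiple of $e_i$, which need not be a multiple of $e_\ell$, so in general they need not agree. I would reconcile the two either through the convention defining $\widetilde{S_\ell}$ (minimal segment containing the lattice points of the line in the first quadrant) or by verifying that the lattice-point offset works out. This is a bookkeeping point I would settle carefully first.

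\textbf{Step 3 (the degree bound).} This is the main obstacle. It amounts to showing $\deg (f_\ell^1)_{\widetilde{S_\ell}}<\deg (f_\ell^0)_{\widetilde{S_\ell}}=d_\ell$. By construction $f_\ell^1=f_\ell-f_\ell^0$ has degree $<\deg f_\ell$, because the leading terms cancel (both are monic of degree $E(S_\ell)$). So its points lie at abscissa $<E(S_\ell)$, and along $\widetilde{S_\ell}$ this gives lattice index at most $d_\ell-1$. I would make this precise using the fact that the points of $\widetilde{S_\ell}$ occur at abscissae in steps of $e_\ell$ starting from $0$ for $f_\ell$ alone. Then $(f_\ell^1)_{\widetilde{S_\ell}}$ has degree $\le d_\ell-1$, so $Q=c\,(f_\ell^1)_{\widetilde{S_\ell}}$ satisfies the required bound. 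Uniqueness of $R_\ell$, which holds because $Y$ is coprime to $(f_\ell^0)_{\widetilde{S_\ell}}$ (its constant term is nonzero), then concludes.
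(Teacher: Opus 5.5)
Your outline matches the paper's proof. The paper takes the congruence from the expansion of $f^1$ given before the statement, identifies the coefficient of the monomial $\bigl(\prod_{i\ne\ell}f_i^0\bigr)_{\widetilde{S_\ell}}$ as the product over $i>\ell$ of the normalized constant terms of the $f_i^0$, and takes the power of $Y$ for granted. But the two points you hedge on are exactly where the argument breaks, and bookkeeping cannot fix them.

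With the paper's convention, a lattice point of abscissa $x$ on a line of slope $-h_\ell/e_\ell$ always contributes to the coefficient of $Y^{\lfloor x/e_\ell\rfloor}$. Suppose $g_1,g_2$ have no points below lines $L_1,L_2$ of that slope, whose lattice points have abscissae $\equiv a_1$, resp.\ $a_2 \pmod{e_\ell}$, with $0\le a_1,a_2<e_\ell$. Then $(g_1g_2)_{L_1+L_2}=Y^{\lfloor (a_1+a_2)/e_\ell\rfloor}(g_1)_{L_1}(g_2)_{L_2}$. For $f_\ell^1$ on its shifted line, the lattice abscissae satisfy $h_\ell x\equiv 1\pmod{e_\ell}$. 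So when $e_\ell>1$ they start at some $s\ge 1$, not at $0$ as you say in Step~3 (your degree bound survives this). The monomial sits at abscissa $X_\ell:=\sum_{i<\ell}E_i$ and has exponent $\lfloor X_\ell/e_\ell\rfloor$. Two consequences follow. First, the ``exactly'' for the term $i=\ell$ in Step~1 is off by $Y^{\epsilon}$, where $\epsilon=\lfloor (s+(X_\ell\bmod e_\ell))/e_\ell\rfloor$. Second, in Step~2 the exponent $\lfloor X_\ell/e_\ell\rfloor$ genuinely can differ from $A_\ell=\sum_{i<\ell}\lfloor E_i/e_\ell\rfloor$. No convention for $\widetilde{S_\ell}$ changes this, since exponents are forced by abscissae.

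In fact the statement as written fails. Take $f=(X-\pi^3)(X-\pi^2)(X^2+\pi X-\pi)$ and $\ell=3$. Then $e_3=2$, $X_3=2$ and $A_3=0$. Lifting each $\psi=Y-1$ by $Y-1$ gives $f_i^0=f_i$ for $i=1,2$ and $f_3^0=X^2-\pi$, so $f^1=\pi X(X-\pi^3)(X-\pi^2)$. Thus $(f^1)_{\widetilde{S_3}}=Y$ and the modulus $(f_3^0)_{S_3}$ is $Y-1$. The definition therefore forces $R_3=1$, whereas the right-hand side is $Y\cdot 1=Y$. A second example: take $f=(X-\pi)(X^3+\pi X^2-\pi^2u)$ with $\bar u\ne 1$ and $\ell=2$. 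Here $A_2=0=\lfloor 1/3\rfloor$, but $s=2$ and $\epsilon=1$, giving $R_2=\bar u$ while the right-hand side is $1$.

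What your Steps~1 and~3 do prove, once the shift is tracked, is $R_\ell=Y^{\epsilon}\bigl(\prod_{i\ne\ell}f_i^0\bigr)_{\widetilde{S_\ell}}(f_\ell^1)_{\widetilde{S_\ell}}$, with $A_\ell$ redefined as $\lfloor (X_\ell+s)/e_\ell\rfloor$. When $e_\ell=1$, as in \cref{sec:example}, one has $s=\epsilon=0$ and $A_\ell=X_\ell$, and your argument then proves the statement verbatim.
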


  \begin{proof}
  By the preceding discussion, the two sides agree up to the coefficient coming from the monomial
  \[
    \left(\prod_{i\ne\ell}f_{i}^{0}\right)_{\widetilde{S_{\ell}}}(Y). 
  \]
  To complete the proof, it remains to compare this coefficient. 

  The coefficient is given by the product of the constant terms of the polynomials corresponding to the sides 
  lying to the right of $S_{\ell}$. 
  Equivalently, it is the term corresponding to the point at which the Newton polygon of $\prod_{i\ne\ell}f_{i}(X)$ 
  is tangent to $\widetilde{S_{\ell}}$. 
  In $\field{F}_K[Y]$, this coefficient is equal to 
  \[
    \prod_{i > \ell} f_i^0(0). 
  \]
  This proves the theorem. 
  \end{proof}

  \section{An Example}\label{sec:example}

  Consider a polynomial 
  \[
    f(X) = X^5 - 43X^3 - 7X^2 + 354X + 1072.
  \]
  We will obtain the Galois group $G$ of $f$ over $\Q$.

  This polynomial has the discriminant
  \[
    \mathrm{Disc}(f) = 1677927225500100 = 2^{2} 3^{14} 5^{2} 1873^{2}.
  \]
  For $p=7$, we have
  \[
    f(X) \equiv X^5 + 6X^3 + 4X +1 \mod 7.
  \] 
  Since the polynomial is irreducible in $\F_{7}[X]$, we can apply Dedekind's theorem to $f$ and $7$,
  and the decomposition group of $7$ is isomorphic to the cyclic group $C_5$ of order $5$. 

  For $p=5$, since
  \begin{align}
    f(X-3)  &= X^5 - 15X^4 + 47X^3 + 110X^2 -360X + 865\\
            &= X^5 - 3\cdot 5X^4 + 47X^3 + 22\cdot 5X^2 - 72\cdot 5X + 173\cdot 5,
  \end{align}
  the $5$-adic Newton polygon of $f(X-3)$ has two sides, one of which is the non-principal part.
  The principal side has the length $3$ and the height $1$, hence the associated polynomial of $f$ and the side has degree one
  and we can apply the theorem of K\"{o}lle and Schmid.
  As a result, the decomposition group corresponding to the principal part is isomorphic to $C_{3}$.

  Finally, we consider $p=3$.
  Since
  \begin{align}
    f(X+1)  &= X^5 + 5X^4 - 33X^3 - 126X^2 + 216X + 1377 \\
            &= X^5 + 5X^4 - 11\cdot 3X^3 - 14\cdot 3^2X^2 + 8\cdot 3^3X + 17\cdot3^4,
  \end{align}
  the $3$-adic Newton polygon of $f(X+1)$ has two sides, one of which is the non-principal part.
  Let $S$ be the principal side of the polygon, $g$ the polynomial corresponding to $S$, and $h$ the polynomial corresponding to the non-principal part.
  Then, we have
  \begin{align}
    &f_{S}(Y) = 2(Y^2 + Y + 2)^2,\\
    &f^{1}_{\tilde{S}}(Y) = 2Y(Y+2)(Y^2+2Y+2).
  \end{align}
  $Y^2 + Y + 2$ does not divide $f^{1}_{\tilde{S}}(Y)$ in $\F_{3}[Y]$.
  Thus, we can apply our main theorem to this polynomial.

  To apply our theorem, we calculate $g^{1}_{\tilde{S}}$ as in \cref{sec:many-sided}.
  \begin{align}
    g^{1}_{\tilde{S}}(Y)  &= \left(f^{1}_{\tilde{S}}(Y) - f_{S}(Y)\right)/h_{0}(0)\\
                          &= (Y^3 + 2Y^2 + 1)/2\\
                          &= 2Y^3 + Y^2 + 2.
  \end{align}
  Thus, we obtain
  \[
    \tilde{g}(X) = X^4+(2\cdot 3+2\cdot3^2)X^3+(2\cdot 3^2+1\cdot 3^3)X^2+(1\cdot3^3)X+(1\cdot 3^4+2\cdot 3^5).
  \]
  This polynomial is irreducible over $\Q_{3}$, and its discriminant is a square.
  Since $f_{S}(Y) = 2(Y^2 + Y + 2)^2$, our theorem implies that the Galois group of $\tilde{g}$ over $\Q_{3}$ has order $4$.
  Hence, the Galois group is $C_{2}\times C_{2}$.

  Thus, $G$ contains subgroups isomorphic to $C_3$, $C_5$, and $C_2 \times C_2$. Hence $|G|$ is divisible by $3$, $5$, and $4$, and therefore by $60$. 
  Since the discriminant of $f$ is a square, we have $G \le A_5$. Thus $|G| = 60$, and hence $G$ is isomorphic to $A_5$. 

  \section*{Acknowledgements}
  The authors are grateful to Professor Masanari Kida for his support and encouragement.

\end{document}